 \newlength{\baseunit}               
\newenvironment{enumeratea} {\begin{enumerate}[\upshape (a)]} {\end{enumerate}}
\newenvironment{enumeratei} {\begin{enumerate}[\upshape (i)]} {\end{enumerate}}
\newenvironment{enumerate1} {\begin{enumerate}[\upshape (1)]} {\end{enumerate}}
\newtheorem{lem}{Lemma}[section]
\newtheorem{thm}[lem]{Theorem}
\newtheorem{theorem}{Theorem}
\newtheorem{conj}{Conjecture}
\newtheorem{prop}[lem]{Proposition}
\newtheorem{cor}[lem]{Corollary}
\theoremstyle{definition}
\newtheorem{defn}[lem]{Definition}
\newtheorem{example}[lem]{Example}
\newtheorem{counterexample}[lem]{Counterexample}
\theoremstyle{remark}
\newtheorem{remark}[lem]{Remark}
\newcommand{\epf}{\qed \vspace{+10pt}}
 \newcommand\cB{\mathcal{B}}  \newcommand\cF{\mathcal{F}} \newcommand\cG{\mathcal{G}}\newcommand\cI{\mathcal{I}}\newcommand\cK{\mathcal{K}}\newcommand\cN{\mathcal{N}}\newcommand\cO{\mathcal{O}}\newcommand\cP{\mathcal{P}}\newcommand\cU{\mathcal{U}}\newcommand\cV{\mathcal{V}}\newcommand\cW{\mathcal{W}}\newcommand\cX{\mathcal{X}}\newcommand\cY{\mathcal{Y}}\newcommand\cZ{\mathcal{Z}}
\renewcommand\AA{\mathbb{A}}\newcommand\GG{\mathbb{G}}\newcommand\PP{\mathbb{P}}
\newcommand\ZZ{\mathbb{Z}}
    \newcommand\fI{\mathfrak{I}}\newcommand\fU{\mathfrak{U}}
  \newcommand\fm{\mathfrak{m}}      
\newcommand\id{\mathrm{id}}
\newcommand\arr{\ifinner\to\else\longrightarrow\fi}
\newcommand\hookarr{\hookrightarrow}
\renewcommand{\setminus}{\smallsetminus}
\newcommand{\charr}{\operatorname{char}}
\newcommand{\Hom}{\operatorname{Hom}}
\newcommand{\Ext}{\operatorname{Ext}}
\newcommand{\Hilb}{\operatorname{Hilb}}
\newcommand{\sSpec}{\operatorname{\mathcal{S}pec}}
\newcommand{\Sym}{\operatorname{Sym}}
\newcommand{\Aut}{\operatorname{Aut}}
\newcommand{\oh}{\cO}
\newcommand{\Spec}{\operatorname{Spec}}
\newcommand{\tensor} {\otimes}
\newcommand{\wt} {\widetilde}
\newcommand{\Sch}{\operatorname{Sch}}
\newcommand{\Spf}{\operatorname{Spf}}
\newcommand{\iso}{\stackrel{\sim}{\arr}}
\newcommand{\dlim}{{\displaystyle \lim_{\longrightarrow}}\,}
\renewcommand{\bar}{\overline}
\newcommand{\GL}{\operatorname{GL}}
\newcommand{\PGL}{\operatorname{PGL}}
\newcommand{\dual}{\vee}
\renewcommand{\hat}{\widehat}
\begin{document}

\title{On the local quotient structure of Artin stacks}

\author[Alper]{Jarod Alper}



\begin{abstract} 
We show that near closed points with linearly reductive stabilizer, Artin stacks are formally locally quotient stacks by the stabilizer.  We conjecture that the statement holds \'etale locally and we provide some evidence for this conjecture.  In particular, we prove that if the stabilizer of a point is linearly reductive, the stabilizer acts algebraically on a miniversal deformation space, generalizing the results of Pinkham and Rim.  We provide a generalization and stack-theoretic proof of Luna's \'etale slice theorem which shows that GIT quotient stacks are \'etale locally quotients stacks by the stabilizer.
\end{abstract}

\maketitle
\section{Introduction}

This paper is motivated by the question of whether an Artin stack is ``locally'' near a point a quotient stack by the stabilizer at that point.  While this question may appear quite technical in nature, we hope that a positive answer would lead to intrinsic constructions of moduli schemes parameterizing objects with infinite automorphisms (e.g. vector bundles on a curve) without the use of classical geometric invariant theory.

We restrict ourselves to studying Artin stacks $\cX$ over a base $S$ near closed points $\xi \in |\cX|$ with linearly reductive stabilizer.  

 We conjecture that this question has an affirmative answer in the \'etale topology.  Precisely,

\begin{conj}  \label{local_quotient_conj}  If $\cX$ is an Artin stack finitely presented over an algebraic space $S$ and $\xi \in |\cX|$ is a closed point with linearly reductive stabilizer with image $s \in S$, then there exists an \'etale neighborhood $S' \arr S, s' \mapsto s$ and an \'etale, representable morphism $f: [X/G] \arr \cX$ where $G \arr S'$ is a flat and finitely presented group algebraic space acting on an algebraic space $X \arr S'$.  There is a lift of $\xi$ to $x: \Spec k \arr X$ such that the group schemes $\Aut_{\cX(k)}(x)$ and $G \times_{S'} k$ are isomorphic and such that $f$ induces an isomorphism $G_x \arr \Aut_{\cX(k)} (f(x))$.
\end{conj}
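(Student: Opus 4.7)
The strategy would be to bootstrap from the formal-local structure theorem emphasized in the abstract (and proved in this paper) to an \'etale-local statement via Artin approximation. Fix a geometric point $x : \Spec k \arr \cX$ representing $\xi$, set $G_0 = \Aut_{\cX(k)}(x)$, and recall that by hypothesis $G_0$ is linearly reductive. The paper's main theorem produces a complete local Noetherian ring $R$ carrying an algebraic $G_0$-action (a miniversal deformation with compatible group action) together with a formally smooth morphism $\Spf R \arr \cX$ which descends to a formally \'etale, representable morphism $[\Spf R / G_0] \arr \cX$ at $\xi$. This is the formal shadow of the quotient presentation we are after.

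The next step would be to globalize the group. Over the strict henselization $\cO_{S,s}^{\textrm{sh}}$ one should lift $G_0$ to a flat, finitely presented (and in good cases smooth) group algebraic space $G$, specializing to $G_0$ at the closed point; deformation theory of linearly reductive group schemes makes this unobstructed. Simultaneously one must algebraize the $G_0$-action on $\Spf R$. The plan is to apply Artin approximation to the functor on $\cO_{S,s}^{\textrm{sh}}$-algebras which parameterizes data consisting of a finitely presented algebraic space $X$ with a $G$-action, a $G$-fixed point over the closed fiber, and a representable morphism $[X/G] \arr \cX$ sending this point to $\xi$. The formal solution provided by the miniversal deformation then yields, after approximation, an actual algebraic space $X$ over an \'etale neighborhood $S' \arr S$, with $G|_{S'}$-action and morphism $f : [X/G] \arr \cX$, whose formal completion at the distinguished point reproduces the original formal picture.

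It remains to verify that $f$ is \'etale and representable. Representability at $x$ is forced by the identification $G \times_{S'} k \iso \Aut_{\cX(k)}(f(x))$ built into the construction, and it spreads out because the relative inertia of $f$ is trivial at $x$, hence on an open neighborhood. \'Etaleness at $x$ follows from formal \'etaleness of the miniversal morphism, and extends to a neighborhood by openness of the \'etale locus; after shrinking $S'$ and $X$, one obtains the desired conclusion.

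The principal obstacle, in my judgment, is the algebraization/approximation step. Artin approximation in its standard formulation handles functors valued in sets or groupoids, whereas here one needs an equivariant version that simultaneously approximates the group action, the morphism $[X/G] \arr \cX$, and the representability and \'etaleness conditions. A secondary difficulty is that in positive or mixed characteristic linearly reductive groups properly contain the reductive ones (they admit extensions by finite linearly reductive subgroup schemes), so canonically lifting $G_0$ to a smooth group over $S'$ is delicate; the eventual shape of $S'$ and of $G$ may depend on choices made during this lift. Overcoming these two issues seems to be the crux of any attempt to upgrade the paper's formal-local theorem to the conjectured \'etale-local statement.
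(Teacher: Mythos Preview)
The statement you are attempting to prove is labeled \emph{Conjecture} in the paper, and the paper does not prove it. The author is explicit about this: ``While we cannot establish a general \'etale local quotient structure theorem, we establish the conjecture formally locally.'' What the paper actually proves is Theorem~1 (the formal-local quotient structure), together with the special cases recorded in Sections~\ref{conj1_true} and~\ref{conj1_true2}: Artin stacks with quasi-finite diagonal, and quotient stacks $[X/G]$ over an algebraically closed field with $G$ connected and $X$ regular separated, or $G$ linearly reductive and $X$ affine. There is therefore no ``paper's own proof'' to compare against.

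Your proposal is not a proof but a strategy sketch, and you are candid about this. The route you outline --- pass from the formal miniversal deformation with $G_0$-action to an algebraic one via Artin approximation, then spread out \'etaleness and representability --- is precisely the approach one would hope works, and the obstacle you flag as ``principal'' is exactly the one the paper isolates in the introduction: for non-finite $G$, an action of $G$ on $\Spf A$ need not lift to an action on $\Spec A$ (the example $\GG_m$ acting on $\Spf k[[x]]$ by $x \mapsto tx$ is given), so formal deformations in the relevant functor may fail to be effective, and Artin approximation cannot be invoked directly. Your secondary concern about lifting $G_0$ is also real but, as you note, less severe; the paper handles it in part~(ii) of Theorem~1 via \cite[Expose~III, Thm.~3.5]{sga3} for smooth affine $G_x$.

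In short: your diagnosis of where the argument breaks is correct and matches the paper's own assessment. What you have written is a reasonable outline of why the conjecture is plausible and where the genuine difficulty lies, but it is not a proof, and neither the paper nor (at the time of its writing) the literature supplies the missing equivariant algebraization step in this generality.
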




For example, if $S = \Spec k$ with $k$ algebraically closed and $x \in \cX(k)$, Conjecture \ref{local_quotient_conj} implies that the stabilizer $G_x$ acts on an algebraic space $X$ of finite type over $k$ fixing some point $\wt x \in X(k)$ and there exists an \'etale, representable morphism $f: [X/G_x] \arr \cX$ mapping $\wt x$ to $x$ and inducing an isomorphism on stabilizer groups.


There are natural variants of Conjecture \ref{local_quotient_conj} that one might hope are true.  One might desire a presentation $[X/G] \arr \cX$ with $X \arr S$ affine and $G \arr S$ linearly reductive; in this case, one would have that \'etale locally on $\cX$, there exists a good moduli space.  One might also like to relax the condition that $G_x$ is linearly reductive to geometrically reductive.  However, some reductivity assumption on the stabilizer seems necessary (see Example \ref{counterexample}).

Conjecture \ref{local_quotient_conj} is known for Artin stacks with quasi-finite diagonals (see Section \ref{conj1_true}).  By a combination of an application of Sumihiro's theorem and Luna's slice argument, this conjecture is true over an algebraically closed field $k$ for global quotient stacks $[X/G]$ where $X$ is a regular scheme separated and of finite type over $k$ and $G$ is a connected algebraic group  (see Section \ref{conj1_true2}).  

However, the conjecture appears to be considerably more difficult for general Artin stacks with non-finite stabilizer group schemes (e.g. $\GG_m^n$, $\PGL_n$, $\GL_n$,...).   To begin with, there is not in general a coarse moduli scheme on which to work \'etale locally.  Second, if $G \arr \Spec k$ is not finite, an action of $G$ on $\Spf A$ for a complete local noetherian $k$-algebra may not lift to an action of $G$ and $\Spec A$ (consider $\GG_m = \Spec k[t]_t$ on $\Spf k[[x]]$ by $x \mapsto tx$) so that for certain deformation functors where one may desire to apply Artin's approximation/algebraization theorems (such as in the proof of \cite[Prop 3.6]{tame}), formal deformations may not be effective.  

While we cannot establish a general \'etale local quotient structure theorem, we establish the conjecture formally locally: 

\begin{theorem} \label{local_quotient_thm}
Let $\cX$ be a locally noetherian Artin stack over a scheme $S$ and $\xi \in |\cX|$ be a closed point with affine linearly reductive stabilizer.  Let $\cG_{\xi} \hookarr \cX$ be the induced closed immersion and $\cX_n$ ($n=1,2, \ldots$) be its nilpotent thickenings.

\begin{enumeratei}
\item If $S = \Spec k$ and there exists a representative $x: \Spec k \arr \cX$ of $\xi$, then there exists affine schemes $U_n$ and actions of $G_x$ on $U_n$ such that $\cX_n \cong [U_n / G_x]$.  If $G_x \arr \Spec k$ is smooth, the schemes $U_n$ are unique up to $G_x$-equivariant isomorphism.
\item Suppose $x: \Spec k \arr \cX$ is a representative of $\xi$ with image $s \in S$ such that $k(s) \hookarr k$ is a finite, separable extension and $G_x \arr \Spec k$ a smooth, affine group scheme.  Fix an \'etale morphism $S' \arr S$ and a point $s' \in S'$ with residue field $k$.  Then there exist affine schemes $U_n$ and linearly reductive smooth group schemes $G_n$ over $S'_n = \Spec \oh_{S',s'}/ \fm_{s'}^{n+1}$ with $G_0 = G_x$ such that $\cX_n \times_S S' \cong [U_n/G_n]$.  The group schemes $G_n \arr S'_n$ are unique and the affine schemes $U_n$ are unique up to $G_n$-equivariant isomorphism.
\end{enumeratei}
\end{theorem}

If, in addition, the stabilizer $G_x$ is smooth, then this theorem implies that $G_x$ acts algebraically on a miniversal deformation space of $\xi$ and this action is unique up to $G_x$-equivariant isomorphism. 

After this paper was written, the author was made aware of similar results by Pinkham and Rim.   In \cite{pinkham}, Pinkham shows that if $\GG_m$ acts on an affine variety $X$ over an algebraically closed field $k$ with an isolated singular point, then the deformation space of $X$ inherits a $\GG_m$-action.  In \cite{rim_equivariant}, Rim showed that for an arbitrary homogeneous category fibered in groupoids, if the stabilizer is a linearly reductive algebraic group, then the stabilizer acts on a miniversal deformation.  

Both Pinkham and Rim follow Schlessinger's approach of building a versal deformation and show inductively that choices can be made equivariantly.   We use an entirely different method.  Following the techniques of \cite{tame}, we use a simple (although technical) deformation theory argument to give a quick proof recovering Rim's result when then category fibered in groupoids is an Artin stack.   Our result is more general in that (1) when the base is a field, we allow for non-reduced stabilizer groups and (2) we can work over any base scheme.  Additionally, Pinkham and Rim appear to give actions on the tangent space and deformation space only by the abstract group of $k$-valued points.  Our methods show immediately that these actions are algebraic.

Luna's \'etale slice theorem implies that GIT quotient stacks are quotient stacks by the stabilizer \'etale locally on the GIT quotient.  More precisely, Luna proved in \cite{luna} that if $G$ is a linearly reductive algebraic group over an algebraically closed field acting on an affine scheme $X$ and if $x \in X$ is a point with closed orbit, then there exists a locally closed, $G_x$-invariant affine $W \subseteq X$ such that the induced morphism on GIT quotients $W//G_x \arr X//G$ is \'etale and such that
$$\xymatrix{
[W/G_x] \ar[r] \ar[d]	& [X/G] \ar[d] \\
W // G_x \ar[r]				& X//G
}$$
is cartesian.  Furthermore, if $x \in X$ is smooth, then $G_x$ acts on a normal space $N_x \subseteq T_x$ to the orbit such that the morphism of GIT quotients $W // G_x \arr N_x // G_x$ is \'etale and
$$\xymatrix{
[W/G_x] \ar[r] \ar[d]	& [N_x/G_x] \ar[d] \\
W // G_x \ar[r]				& N_x//G_x
}$$
is cartesian.  Luna's \'etale slice theorem has had many remarkable applications.

 We prove the following generalization of Luna's \'etale slice theorem.

\begin{theorem} \label{luna_thm} Let $S$ be a noetherian affine scheme.  Let $G \arr S$ be a smooth affine group scheme acting on a scheme $X$ affine and of finite type over $S$ and denote $p: \cX = [X/G] \arr S$.  Suppose $\cX \arr X//G$ is a good moduli space with $X//G \arr S$ of finite type.  Let $f: S \arr X$ be a section such that the stabilizer group scheme $G_f \arr S$ is smooth and the orbit of $f$ is closed (ie.  $o(f) \arr X \times_S T$ is a closed immersion).

\begin{enumeratei}
\item If $X \arr S$ is smooth at points in $f(S)$, there exists a locally closed $G_f$-invariant subscheme $W \hookarr X$ affine over $S$, a normal space to the orbit $N \subset T_{X/S} \times_X S$ with an action of $G_f$, and a $G_f$-equivariant morphism $N \arr W$.  If $\cW = [W/G_f]$ and $\cN = [N /G_f]$, the induced diagram

$${\def\objectstyle{\scriptstyle}
\def\labelstyle{\scriptstyle}
\xymatrix@=20pt{
					& \cW \ar[rd] \ar[ld] \ar[d]\\
\cN  \ar[d]			&  W//G_f \ar[rd] \ar[ld]		& \cX \ar[d]\\
N//G_f				&					& X//G
}}$$
is cartesian with \'etale diagonal arrows.

\item If there is a $G$-equivariant embedding of $X$ into a smooth affine $S$-scheme, there exists $W$ as above such that the diagram
$$\xymatrix{
\cW \ar[r] \ar[d]							& \cX \ar[d] \\
W//G_f \ar[r]							& X//G
}$$
is cartesian with \'etale horizontal arrows.
\end{enumeratei}
\end{theorem}

\begin{remark}
If $S = \Spec k$ with $k$ an algebraically closed field and $G \arr \Spec k$ is a smooth and linearly reductive algebraic group, we recover Luna's slice theorem \cite[p.97]{luna}.  We note that in \cite{luna}, the field $k$ is assumed to have characteristic 0 but the methods of the paper clearly carry over to positive characteristic if $G$ is a smooth and linearly reductive algebraic group.
\end{remark}

\begin{remark} 
The condition that $X$ can be $G$-equivariantly embedded into a smooth affine $S$-scheme is satisfied under very general hypotheses.  If $S$ is regular of dimension 0 or 1, this is well known.  Thomason shows in \cite[Corollary 3.7]{thomason}  that $X$ can be $G$-equivariantly embedded into a vector bundle space $\AA(\cV)$ if:  (1) $S$ is regular with $\dim S \le 2$ and $G \arr S$ has connected fibers, or (2) $G$ is semisimple or split reductive, or (3) $G$ is reductive with isotrivial radical and coradical, or (4) $S$ is normal and $G$ is reductive.
\end{remark}

In particular, over $S = \Spec k$ with algebraically closed, Conjecture \ref{local_quotient_conj} holds for any quotient stack $\cX = [\Spec A/G]$ with $G \arr \Spec k$ a smooth, linearly reductive group scheme around a closed point $\xi \in |\cX|$.

Our statement is slightly more general than Luna's slice theorem.  First, we only require $\cX = [X/G]$ to be a quotient stack admitting a good moduli space with $X$ affine and $G$ an arbitrary smooth, affine group scheme (which is not necessarily linearly reductive).  If $S = \Spec k$ with $\charr(k) = 0$, this is an equivalent formulation since $\GL_n$ is linearly reductive and any quotient stack $[X/G]$ admitting a good moduli space is equivalent to $[\Spec A/\GL_n]$ for some affine scheme $\Spec A$ with a $\GL_n$-action. Second, our version is valid over any noetherian base scheme $S$ with respect to $S$-valued points with closed orbit and smooth, linearly reductive stabilizer.

\subsection*{Acknowledgments}   I thank Dan Abramovich, Johan de Jong, Daniel Greb, Andrew Kresch, Max Lieblich, Martin Olsson, David Smyth, Jason Starr, Ravi Vakil, Fred van der Wyck and Angelo Vistoli for their suggestions.

\section{Background} \label{notation_sec}

We will assume schemes and algebraic spaces to be quasi-separated. An Artin stack, in this paper, will have a quasi-compact and separated diagonal.  We will work over a fixed base scheme $S$.

Recall that if $G \arr S$ is a group scheme acting on an algebraic space $X \arr S$ and $f:T \arr X$ is a $T$-valued point of $X$, then the orbit of $f$, denoted $o(f)$, set-theoretically is the image of $(\sigma \circ (1_G \times f), p_2): G \times_S T \arr X \times_S T$.  We call $G \arr S$ an \emph{fppf group scheme} if $G \arr S$ is a separated, flat, and finitely presented group scheme.  If $G_f \arr T$ is an fppf group scheme, then the orbit has the scheme structure given by
$$\xymatrix{
o(f) \ar[r] \ar[d]	&	X \times_S T \ar[d] \\
BG_f	 \ar[r]		&	[X/G] \times_S T
}$$
If $G_f \arr T$ and $G \arr S$ are smooth group schemes, then $o(f) \arr T$ is smooth.

\subsection{Stabilizer preserving morphisms}
The following definition generalizes the notion of \emph{fixed-point reflecting} morphisms was introduced by Deligne (see \cite[IV.1.8]{knutson}), Koll\'ar  (\cite[Definition 2.12]{kollar_quotients}) and by Keel and Mori (\cite[Definition 2.2]{keel-mori}).  When translated to the language of stacks, the term \emph{stabilizer preserving} seems more appropriate and we will distinguish between related notions.

\begin{defn} Let $f: \cX \arr \cY$ be a morphism of Artin stacks.  We define:
\begin{enumeratei}
\item $f$ is \emph{stabilizer preserving} if the induced $\cX$-morphism $\psi: I_{\cX} \arr I_{\cY} \times_{\cY} \cX$ is an isomorphism.  
\item For $\xi \in |\cX|$, $f$ is \emph{stabilizer preserving at $\xi$} if for a (equivalently any) geometric point $x: \Spec k \arr \cX$ representing $\xi$, the fiber $\psi_x: \Aut_{\cX(k)}(x) \arr \Aut_{\cY(k)} (f(x))$ is an isomorphism of \emph{group schemes} over $k$.
\item $f$ is \emph{pointwise stabilizer preserving} if $f$ is stabilizer preserving at $\xi$ for all $\xi \in |\cX|$.
\end{enumeratei}
\end{defn}

\begin{remark}
Property (i) is requiring that for all $T$-valued points $x: T \arr \cX$, the induced morphism $\Aut_{\cX(T)}(x) \arr \Aut_{\cY(T)} (f(x))$ is an isomorphism of groups.  
\end{remark}

\begin{remark}  One could also consider in (ii) the weaker notion where the morphism $\psi_x$ is only required to be isomorphisms of groups on $k$-valued points.  This property would be equivalent if  $\cX$ and $\cY$ are Deligne-Mumford stacks over an algebraically closed field $k$.
\end{remark}

\begin{remark}  \label{ex_stabilizer}
Any morphism of algebraic spaces is stabilizer preserving.  Both properties are stable under composition and base change.  While a stabilizer preserving morphism is clearly pointwise stabilizer preserving, the converse is not true.  For example, consider the action of $\ZZ_2 \times \ZZ_2 = \langle\sigma, \tau \rangle$ on the affine line with a double origin $X$ over a field $k$ where $\sigma$ acts by inverting the line but keeping both origins fixed and $\tau$ acts by switching the origins.  Then the stabilizer group scheme $S_X \hookarr \ZZ_2 \times \ZZ_2 \times X \arr X$ has a fibers $(1, \tau)$ everywhere except over the origins where fibers are $(1, \sigma)$.  The subgroup $H = \langle 1, \tau \sigma \rangle$ acts freely on $X$ and there is an induced trivial action of $\ZZ_2$ on the non-locally separated line $Y = X / H$.  There is $\ZZ_2$-equivariant morphism $Y \arr \AA^1$ (with the trivial $\ZZ_2$ action on $\AA^1$) which induces a morphism $[Y / \ZZ_2] \arr [\AA^1 / \ZZ_2]$ which is pointwise stabilizer preserving but not stabilizer preserving.  We note that the induced map $[Y / \ZZ_2] \arr \AA^1$ is not a $\ZZ_2$-gerbe even though the fibers are isomorphic to $B\ZZ_2$.  This example arose in discussions with Andrew Kresch.
\end{remark}

It is natural to ask when the property of being pointwise stabilizer preserving is an open condition and what additional hypotheses are necessary to insure that a pointwise stabilizer preserving morphism is stabilizer preserving.  First, we have:

\begin{prop} (\cite[Prop. 3.5]{rydh_quotients}) \label{stab_pres_open}
Let $f: \cX \arr \cY$ be a representable and unramified morphism of Artin stack with $I_{\cY} \arr \cY$ proper.  The locus $\cU \subseteq |\cX|$ over which $f$ is pointwise stabilizer preserving is open and $f|_{\cU}$ is stabilizer preserving.
\end{prop}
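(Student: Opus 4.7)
The plan is to reduce the statement to two facts: under the hypotheses, the comparison morphism $\psi\colon I_\cX \to I_\cY\times_\cY\cX$ is itself an open immersion, and the properness of the projection $I_\cY\times_\cY\cX \to \cX$ converts this into an open subset of $|\cX|$.

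First I would establish openness of $\psi$ by exhibiting it as a $2$-categorical base change of the relative diagonal $\Delta_f\colon \cX \to \cX\times_\cY\cX$. Unwinding $T$-points, $I_\cX(T) = \{(x,\beta\colon x\iso x)\}$, $(I_\cY\times_\cY\cX)(T) = \{(x,\alpha\colon f(x)\iso f(x))\}$, and $(\cX\times_\cY\cX)(T) = \{(x_1,x_2,\gamma\colon f(x_1)\iso f(x_2))\}$. There is a natural morphism $\iota\colon I_\cY\times_\cY\cX \to \cX\times_\cY\cX$ sending $(x,\alpha)\mapsto(x,x,\alpha)$, and I claim there is a $2$-Cartesian square
$$
\begin{array}{ccc}
I_\cX & \xrightarrow{\psi} & I_\cY\times_\cY\cX \\
\downarrow & & \downarrow \iota \\
\cX & \xrightarrow{\Delta_f} & \cX\times_\cY\cX.
\end{array}
$$
Checking this is a direct but slightly delicate $2$-categorical unwinding: objects of the $2$-fiber product are tuples $(x,y,\alpha,\phi_1,\phi_2)$ with $\phi_i\colon y \iso x$ and $\alpha = f(\phi_2^{-1}\phi_1)$; rigidifying via $\phi_1$ identifies such a tuple with $(x,\beta) \in I_\cX(T)$, $\beta := \phi_2^{-1}\phi_1$. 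Since $f$ is representable and unramified, $\Delta_f$ is an \'etale monomorphism and hence an open immersion, so the base change $\psi$ along $\iota$ is an open immersion as well.

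Next I would exploit properness. Let $Z \hookrightarrow I_\cY\times_\cY\cX$ be the closed complement of the open substack $\psi(I_\cX)$, and let $\pi\colon I_\cY\times_\cY\cX \to \cX$ denote the projection, which is proper as a base change of $I_\cY \to \cY$. Then $\pi(Z)$ is closed in $|\cX|$; set $\cU := |\cX|\setminus\pi(Z)$. For $\xi \in \cU$ the fiber $Z_\xi$ is empty, so $\psi_\xi\colon\Aut(x)\to\Aut(f(x))$ is a surjective open immersion of group schemes over the residue field, and is therefore an isomorphism; conversely, if $\psi_\xi$ is an isomorphism then $Z_\xi = \emptyset$ and $\xi \notin \pi(Z)$. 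Hence $\cU$ is open and coincides with the pointwise stabilizer preserving locus. Finally, over $\cU$ the pullback of $Z$ is empty, so $\psi|_\cU$ is an open immersion with empty complement, i.e.\ an isomorphism, which is precisely the assertion that $f|_\cU$ is stabilizer preserving.

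The main obstacle I anticipate is the verification of the $2$-Cartesian square above, which requires careful $2$-categorical bookkeeping; the remaining steps are formal, using only that \'etale monomorphisms are open immersions and that proper morphisms push closed subsets to closed subsets.
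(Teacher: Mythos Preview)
Your proposal is correct and follows essentially the same approach as the paper: both establish the Cartesian square identifying $\psi$ as a base change of $\Delta_{\cX/\cY}$, deduce that $\psi$ is an open immersion from the unramifiedness of $f$, and then use properness of the projection $I_\cY\times_\cY\cX\to\cX$ to conclude that $\cU=|\cX|\setminus p_2(I_\cY\times_\cY\cX\setminus I_\cX)$ is open. Your write-up is more detailed in spelling out the $2$-categorical verification and the identification of $\cU$ with the pointwise stabilizer preserving locus, but the argument is the same.
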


\begin{proof}
 The cartesian square
$$\xymatrix{
I_{\cX} \ar@{^(->}[r]^{\psi} \ar[d]						& I_{\cY} \times_{\cY} \cX \ar[d] \\
\cX \ar@{^(->}[r]^{\Delta_{\cX/\cY}}					& \cX \times_{\cY} \cX
}$$
implies that $\psi$ is an open immersion and since the projection $p_2: I_{\cY} \times_{\cY} \cX \arr \cX$ is proper,  the locus $\cU = \cX \setminus p_2 ( I_{\cY} \times_{\cY} \cX \setminus I_{\cX})$ is open.
\end{proof}

\begin{remark}  The proposition is not true if $f$ is ramified: if $f: [\AA^1 / \ZZ_2] \arr [\AA^1 / \ZZ_2]$ where $\ZZ_2$ is acting by the non-trivial involution and trivially, respectively, then $\psi$ is only an isomorphism over the origin.  The proposition also fails without the properness hypothesis:  if $f: [\AA^2 / \GG_m] \arr [\AA^1 / \GG_m]$ where $\GG_m$ is acting by vertical scaling on $\AA^2$ and trivially on $\AA^1$, then $\psi_x$ is only an isomorphism over the $x$-axis. 
\end{remark}

\begin{remark}
The question in general of when a pointwise stabilizer preserving morphism $f: \cX \to \cY$ is stabilizer preserving can be subtle in general.  Even if $\cX$ has finite inertia, the notions are not equivalent. For instance, consider $X = \Spec k[\epsilon]/(\epsilon^2)$ where $k$ is a field with a $\mu_2$ action where $\epsilon$ has degree $1$.  Then $[X/\mu_2] \to B \mu_2$ is pointwise stabilizer preserving but not stabilizer preserving.
\end{remark}

\subsection{Weakly saturated morphisms}
If $f: \cX \arr \cY$ is a morphism of Artin stacks of finite type over a field $k$, the property that closed points map to closed points has several desired consequences (see for instance Theorems \ref{etale_preserving} and \ref{fund_lem}).  However, this does not seem to be the right notion over an arbitrary base scheme as even finite type morphisms of schemes (e.g. $\Spec k(x) \arr \Spec k[x]_{(x)}$) need not send closed points to closed points.   Weakly saturated morphisms will enjoy similar properties.

\begin{defn}  
A morphism $f: \cX \arr \cY$ of Artin stacks over an algebraic space $S$ is \emph{weakly saturated} if for every geometric point $x: \Spec k \arr \cX$ with $x \in |\cX \times_S k|$ closed, the image $f_s(x) \in |\cY \times_S k|$ is closed.  A morphism $f: \cX \arr \cY$ is \emph{universally weakly saturated} if for every morphism of Artin stacks $\cY' \arr \cY$, $\cX \times_{\cY} \cY' \arr \cY'$ is weakly saturated.   
\end{defn}

\begin{remark} Although the above definition seems to depend on the base $S$, it is in fact independent:  if $S \arr S'$ is any morphism of algebraic spaces then $f$ is weakly saturated over $S$ if and only if $f$ is weakly saturated over $S'$.  Any morphism of algebraic spaces is universally weakly saturated.  If $f: \cX \arr \cY$ is a morphism of Artin stacks of \emph{finite type over $S$}, then $f$ is weakly saturated if and only if for every geometric point $s: \Spec k \arr S$, $f_s$ maps closed points to closed points.  If $f: \cX \arr \cY$ is a morphism of Artin stacks of finite type over $\Spec k$, then $f$ is weakly saturated if and only if $f$ maps closed points to closed points.
\end{remark}

\begin{remark}
The notion of weakly saturated is not stable under base change.  Consider the two different open substacks $\cU_1, \cU_2 \subseteq [\PP_1 / \GG_m]$ isomorphic to $[\AA^1 / \GG_m]$ over $\Spec k$.  Then 
$$\xymatrix{
\cU_1 \sqcup \cU_2 \sqcup \Spec k \sqcup \Spec k	\ar[r] \ar[d]		& \cU_1 \sqcup \cU_2 \ar[d] \\
\cU_1 \sqcup \cU_2 \ar[r]									& [\PP_1 / \GG_m]
}$$
is 2-cartesian and the induced morphisms $\Spec k \arr \cU_i$ are open immersions which are not weakly saturated.  
\end{remark}

\begin{remark}
There is a stronger notion of a \emph{saturated} morphism $f: \cX \arr \cY$ requiring for every geometric point $x: \Spec k \arr \cX$ with image $s: \Spec k \arr S$, then $f_s( \overline{\{x\}}) \subseteq |\cX \times_S k|$ is closed.  We hope to explore further the properties of saturated and weakly saturated morphisms as well as develop practical criteria to verify them in future work.
\end{remark}

\begin{remark} \label{saturated_open}
Recall as in \cite[Definition 5.2]{alper_good_arxiv}, 
that if $\phi: \cX \arr Y$ is a good moduli space, an open substack $\cU \subseteq \cX$ is \emph{saturated for $\phi$} if $\phi^{-1} (\phi(\cU)) = \cU$.  In this case, an open immersion $\cU \arr \cX$ is weakly saturated if and only if $\cU$ is saturated for $\phi$.
\end{remark}

\section{Evidence for Conjecture \ref{local_quotient_conj}}

\subsection{Conjecture \ref{local_quotient_conj} is known for stacks with quasi-finite diagonal} \label{conj1_true}

An essential ingredient in the proof of the Keel-Mori theorem (see \cite[Section 4]{keel-mori}) is the existence of \'etale, stabilizer preserving neighborhoods admitting finite, flat covers by schemes.  We note that the existence of flat, quasi-finite presentations was known to Grothendieck (see \cite[Exp V, 7.2]{sga3}).  We find the language of \cite{conrad} more appealing:

\begin{prop} (\cite[Lemma 2.1 and 2.2]{conrad})  \label{prop1} Let $\cX$ be an Artin stack locally of finite presentation over a scheme $S$ with quasi-finite diagonal $\Delta_{\cX/S}$. For any point $\xi \in |\cX|$, there exists a representable, \'etale morphism $f: \cW \arr \cX$ from an Artin stack $\cW$ admitting a finite fppf cover by a separated scheme and point $\omega \in |\cW|$ such that $f$ is stabilizer preserving at $\omega$.  In particular, $\cW$ has finite diagonal over $S$.
\end{prop}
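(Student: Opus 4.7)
The plan is to follow Conrad's approach: first, produce an \'etale neighborhood of $\xi$ equipped with a finite fppf cover by a scheme; then shrink this neighborhood to achieve the stabilizer-preserving property at a chosen lift of $\xi$.

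For the first step, start with a smooth representable presentation $p: U \arr \cX$ by an affine scheme $U$ and a point $u \in U$ mapping to $\xi$. Because $\Delta_{\cX/S}$ is quasi-finite, the automorphism group scheme at $\xi$ is quasi-finite, so the fiber $U_\xi$ has dimension exactly $d := \dim_u p$ at $u$. This permits a slice construction: choose hypersurfaces $H_1, \ldots, H_d \subset U$ through $u$ whose restrictions to $U_\xi$ form a regular sequence of parameters at $u$, so that $V := H_1 \cap \cdots \cap H_d$ is a locally closed subscheme through $u$ on which $q := p|_V$ is flat and quasi-finite at $u$. After shrinking $V$, we may assume $q$ is flat, quasi-finite, separated, and finitely presented on all of $V$.

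Apply Zariski's Main Theorem: the quasi-finite separated morphism $q$ factors, \'etale-locally on $\cX$ around $\xi$, as an open immersion $V \hookarr \bar V$ into a scheme finite over a suitable \'etale neighborhood $\cW \arr \cX$. By openness of flatness, and the fact that $V \subseteq \bar V$ meets each fiber of $\bar V \arr \cW$ near $u$, one shrinks $\cW$ so that $\bar V \arr \cW$ becomes a finite fppf cover by a separated scheme. Then $\cW$ has finite diagonal by fppf descent applied to the finite morphism $\bar V \times_{\cW} \bar V \arr \bar V \times_S \bar V$. For the stabilizer-preserving property at a lift $\omega$ of $\xi$, Proposition \ref{stab_pres_open} applies: since $\cW$ has finite (hence proper) inertia and $f: \cW \arr \cX$ is representable and \'etale, hence unramified, the locus in $|\cW|$ where $f$ is pointwise stabilizer preserving is open; one verifies from the slice construction that $\omega$ lies in this locus, since near $u$ the automorphisms of $\bar V$ over $\cW$ match those of the chosen representative of $\xi$ in $\cX$.

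The main obstacle is the descent step converting $\bar V \arr \cX$ into a finite fppf cover of a genuine \'etale neighborhood of $\cX$ with finite diagonal. Simply taking the scheme-theoretic image of the resulting finite map in $\cX$ would only produce an open substack, which inherits the quasi-finite (not finite) diagonal; one must instead analyze the groupoid $\bar V \times_{\cX} \bar V \rrarrows \bar V$, decompose the quasi-finite morphism $\bar V \times_{\cX} \bar V \arr \bar V$ into a finite part and a complementary part using a further application of Zariski's Main Theorem, and discard the latter to produce the desired \'etale neighborhood $\cW$ whose inertia is genuinely finite.
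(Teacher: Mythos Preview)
Your overall strategy is in the right spirit but diverges from the paper's route, and two specific steps are flawed.

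First, the approach. The paper does not prove this proposition directly; it cites Conrad and explains in the Remark following the statement that $\cW$ is constructed as the \'etale locus of the relative Hilbert stack $\Hilb_{V/\cX} \arr \cX$, where $V \arr \cX$ is a quasi-finite fppf scheme cover (your slice $V$). Your final paragraph instead proposes splitting the groupoid $\bar V \times_{\cX} \bar V \rightrightarrows \bar V$ via Zariski's Main Theorem into a finite part and an open complement; this is essentially the original Keel--Mori strategy and does ultimately work, but it is a genuinely different construction. The Hilbert-stack route is cleaner because it packages the ``finite piece of the groupoid'' intrinsically and makes the stabilizer-preserving point transparent: the point $\omega$ is the one corresponding to the entire fiber $V \times_{\cX} \Spec k$, whose automorphisms are visibly those of $\xi$.

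Second, two concrete problems with your middle paragraph. You write that Zariski's Main Theorem factors $q: V \arr \cX$ ``\'etale-locally on $\cX$'' through a finite map to an \'etale neighborhood $\cW \arr \cX$; but $\cX$ is a stack, and producing such a $\cW$ is exactly the content of the proposition, so this step is circular as stated. Your own last paragraph effectively concedes this. More seriously, your appeal to Proposition~\ref{stab_pres_open} is illegitimate: that proposition requires the inertia of the \emph{target} to be proper, and here the target is $\cX$, which is only assumed to have quasi-finite diagonal. You do not actually need an openness statement at all --- the proposition only asks for stabilizer preservation at a single point $\omega$ --- so this should be verified directly from the construction, as the Hilbert-stack description makes immediate.
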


\begin{remark} The stack $\cW$ is constructed as the \'etale locus of the relative Hilbert stack $\Hilb_{V/\cX} \arr \cX$ where $V \arr \cX$ is a quasi-finite, fppf scheme cover.  In fact, the morphism $\cW \arr \cX$ is stabilizer preserving at points $\Spec k \arr \cW$ corresponding to the entire closed substack of $V \times_{\cX} \Spec k$ so that every point $x \in |\cX|$ has some preimage at which $f$ is stabilizer preserving.  If $\cX$ has finite inertia, it follows from Proposition \ref{stab_pres_open} that $f$ is stabilizer preserving.  In fact, as shown in \cite[Remark 2.3]{conrad}, the converse is true:  for $\cX$ as above with a representable, quasi-compact, \'etale, pointwise stabilizer preserving cover $\cW \arr \cX$ such that $\cW$ is separated over $S$ and admits a finite fppf scheme cover, then $\cX$ has finite inertia.  
\end{remark}

We now restate one of the main results from \cite{tame}.

\begin{prop} (\cite[Prop. 3.6]{tame}) \label{prop2}
Let $\cX$ be an Artin stack locally of finite presentation over a scheme $S$ with finite inertia.  Let $\phi: \cX \arr Y$ be its coarse moduli space and let $\xi \in |\cX|$ be a point with linearly reductive stabilizer with image $y \in Y$.  Then there exists an \'etale morphism $U \arr Y$, a point $u$ mapping to $y$, a finite linearly reductive group scheme $G \arr U$ acting on a finite, finitely presented scheme $V \arr U$ and an isomorphism $[V/G] \iso U \times_Y \cX$ of Artin stacks over $U$.  Moreover, it can be arranged that there is a representative of $\xi$ by $x: \Spec k(u) \arr \cX$ such that $G \times_U k(u)$ and $\Aut_{\cX(k(u))}(x)$ are isomorphic as group schemes over $\Spec k(u)$.
\end{prop}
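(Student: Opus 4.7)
My plan is to combine Proposition \ref{prop1} with an equivariant deformation-theoretic lifting argument and Artin approximation on the coarse moduli space $Y$. The steps would be as follows.

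\textbf{Step 1 (Reduction to a finite fppf cover).} By Proposition \ref{prop1}, choose a representable, \'etale morphism $f:\cW \to \cX$ from an Artin stack $\cW$ that admits a finite fppf cover by a separated scheme and such that $f$ is stabilizer preserving at a chosen point $\omega \in |\cW|$ above $\xi$. Since $\cX$ has finite inertia, Proposition \ref{stab_pres_open} shrinks $\cW$ to a representable \'etale stabilizer-preserving neighborhood of $\xi$. By Keel--Mori, $\cW$ has a coarse moduli space $Y' \to Y$ which is \'etale at the image of $\omega$, and $\cW = \cX \times_Y Y'$ because $f$ is stabilizer preserving. Replacing $(\cX, Y)$ by $(\cW, Y')$, we may assume $\cX$ itself admits a finite fppf cover by a separated scheme.

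\textbf{Step 2 (Formal quotient structure).} Write $G_\xi$ for the stabilizer at $\xi$, a finite linearly reductive group scheme over $k = k(\xi)$. Such groups are rigid: they lift uniquely and flatly over any Artinian local base, and their representation theory is semisimple. Along the nilpotent thickenings of the residual gerbe $\cG_\xi \hookarr \cX$, I would build inductively affine schemes $U_n$ with actions of the unique flat lift $G_n$ of $G_\xi$ over $\Spec \h\oh_{Y,y}/\fm^{n+1}$ together with isomorphisms $\cX_n \cong [U_n/G_n]$, in the spirit of the Main Theorem of this paper. The obstruction to extending the quotient description from order $n$ to order $n+1$, as well as the automorphisms of such a lift, are controlled by $G_n$-equivariant cohomology of a coherent sheaf on an affine $G_n$-scheme, and these vanish by linear reductivity. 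Passing to inverse limits yields $\cX \times_Y \Spf \h\oh_{Y,y} \cong [\Spf \h A / \h G]$ for a unique linearly reductive formal group scheme $\h G$ over $\Spf \h\oh_{Y,y}$ acting on a finite formal $\h\oh_{Y,y}$-algebra $\h A$.

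\textbf{Step 3 (Algebraization).} To descend the formal picture to an \'etale neighborhood, apply Artin approximation on $\oh_{Y,y}^h$ to the functor $F$ parametrizing tuples consisting of a finite linearly reductive group scheme $G$, a finite finitely presented scheme $V$ with a $G$-action, and a representable, \'etale, stabilizer-preserving morphism $\alpha:[V/G] \to \cX$ (at a distinguished point above $y$). The formal description of Step 2 supplies a formal element of $F$, and Artin approximation produces an actual element over an \'etale neighborhood $U \to Y$ of $y$ agreeing with the formal datum up to arbitrarily high order. The resulting $\alpha$ is a representable, \'etale, stabilizer-preserving morphism of Artin stacks with finite inertia which, after possibly shrinking $U$, is an isomorphism on coarse moduli spaces and hence an isomorphism.

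The main obstacle is Step 2: executing the equivariant lifting to obtain the quotient description $\cX_n \cong [U_n/G_n]$ uniquely at each order. Linear reductivity of $G_\xi$ is precisely what kills the relevant obstructions and automorphisms, which is why some reductivity hypothesis on the stabilizer is indispensable here. Finiteness of the inertia is what allows Step 3 to be carried out on the honest scheme $Y$ via the coarse moduli space, sidestepping the non-effectivity of formal deformations of actions of non-finite group schemes flagged in the introduction; it is precisely this difficulty that prevents an immediate extension to the general setting of Conjecture \ref{local_quotient_conj}.
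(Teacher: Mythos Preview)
The paper does not actually prove this proposition: its entire proof reads ``Strictly speaking, the last statement is not in \cite{tame} although their construction yields the statement,'' i.e.\ the result is imported wholesale from \cite{tame} with only the observation that the final sentence about $G \times_U k(u) \cong \Aut_{\cX(k(u))}(x)$ is implicit in that construction. So there is no argument in the paper against which to compare your proposal.

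That said, your outline is a faithful reconstruction of the strategy of \cite[Prop.~3.6]{tame}: reduce to finite inertia with a finite fppf scheme cover, build the quotient presentation formally over $\Spf \h\oh_{Y,y}$ using linear reductivity to kill obstructions, then algebraize and apply Artin approximation on $Y$. Two points deserve more care if you want this to stand on its own. First, between Steps~2 and~3 you must pass from a compatible system over the $\Spec \h\oh_{Y,y}/\fm^{n+1}$ to an honest object over $\Spec \h\oh_{Y,y}$ before Artin approximation applies; you allude to this at the end, and it works precisely because $G$ and $V$ are \emph{finite} over the base (so the inverse limit of their coordinate rings is a finite $\h\oh_{Y,y}$-module and formal GAGA is trivial), and because the morphism $[V/G]\to\cX$ then algebraizes as $[V/G]$ is proper over $\h\oh_{Y,y}$. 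Second, the functor $F$ in Step~3 needs to be packaged carefully to be locally of finite presentation; in \cite{tame} this is done by parametrizing instead the data of a finite linearly reductive group scheme together with a section of $\cX$ whose automorphism sheaf is that group scheme, and then recovering $V$ afterward. Your Step~1 claim that $\cW \cong \cX\times_Y Y'$ also requires the (true, but not entirely trivial) fact that a representable, \'etale, stabilizer-preserving morphism of finite-inertia stacks inducing an isomorphism on coarse spaces is an isomorphism.
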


Strictly speaking, the last statement is not in \cite{tame} although their construction yields the statement.


\begin{remark}  In particular, this proposition implies that given any Artin stack $\cX$ locally of finite presentation over a scheme $S$ with finite inertia, the locus of points with linearly reductive stabilizer is open.
\end{remark}

\begin{cor}  Conjecture \ref{local_quotient_conj} is true for Artin stacks $\cX$ locally of finite presentation over $S$ with quasi-finite diagonal.  In fact, \'etale presentations $[X/G]$ can be chosen so that $X$ is affine.
\end{cor}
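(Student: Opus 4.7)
The plan is to combine Propositions \ref{prop1} and \ref{prop2} in two steps, with the first reducing to the finite-inertia setting in which Proposition \ref{prop2} already supplies a quotient presentation by a finite group scheme. First, I would apply Proposition \ref{prop1} at the given closed point $\xi$ to produce a representable \'etale morphism $f_1: \cW \arr \cX$ with $\cW$ having finite diagonal over $S$, together with a point $\omega \in |\cW|$ above $\xi$ at which $f_1$ is stabilizer preserving. The finite diagonal of $\cW$ forces finite inertia, and the stabilizer preserving property at $\omega$ transports linear reductivity from the stabilizer of $\xi$ to that of $\omega$.

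Next, I would apply Proposition \ref{prop2} to $\cW$ at $\omega$. Writing $\phi: \cW \arr Y$ for the coarse moduli space and $y = \phi(\omega)$, this produces an \'etale morphism $U \arr Y$, a point $u$ over $y$, a finite linearly reductive group scheme $G \arr U$ acting on a finite, finitely presented $U$-scheme $V$, an isomorphism $[V/G] \iso U \times_Y \cW$, and a representative $x: \Spec k(u) \arr \cW$ of $\omega$ satisfying $G \times_U k(u) \cong \Aut_{\cW(k(u))}(x)$. Composing with $f_1$ gives a representable \'etale morphism
\[
[V/G] \iso U \times_Y \cW \arr \cW \xarr{f_1} \cX.
\]
Since $V \arr U$ is finite, shrinking $U$ to an affine open neighborhood of $u$ forces $V$ to be affine, which establishes the refinement that the atlas $X$ can be chosen affine.

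Finally, the stabilizer identification required by Conjecture \ref{local_quotient_conj} is immediate: the stabilizer preserving property of $f_1$ at $\omega$ supplies $\Aut_{\cW(k(u))}(x) \cong \Aut_{\cX(k(u))}(f_1(x))$, and combining this with the identification from Proposition \ref{prop2} yields $G \times_U k(u) \cong \Aut_{\cX(k(u))}(f_1(x))$. Since the substantive technical content has been packaged entirely into the two cited propositions, there is no genuine obstacle in the argument; the only minor bookkeeping is to regard $U$ as the \'etale neighborhood $S'$ over which $V$ and $G$ live, so that the data produced match the precise form demanded by the conjecture.
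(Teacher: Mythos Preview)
Your proof is correct and follows exactly the same two-step strategy as the paper: apply Proposition~\ref{prop1} to pass to a stack $\cW$ with finite inertia while preserving the stabilizer at the chosen point, then apply Proposition~\ref{prop2} to $\cW$. The paper's own proof is two sentences long and leaves implicit precisely the details you have spelled out (transport of linear reductivity along the stabilizer-preserving map, affineness of $V$ after shrinking $U$, and the stabilizer identification at the end).
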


\begin{proof}  
Given $\xi \in |\cX|$, by Proposition \ref{prop1} there exists an \'etale neighborhood $f: \cW \arr \cX$ stabilizer preserving at some $\omega \in |\cX|$ above $\xi$ such that $\cW$ has finite inertia.  Applying Proposition \ref{prop2} to $\cW$ achieves the result.
\end{proof}

\begin{remark}  In fact, the conjecture is even true for Deligne-Mumford stacks with finite inertia which are not necessarily tame (i.e. have points with non-linearly reductive stabilizer).  This follows easily from (see \cite[Lemma 2.2.3]{abramovich-vistoli} and \cite[Thm 2.12]{olsson_homstacks}).  We wonder if any Artin stack with finite inertia can \'etale locally be written as a quotient stack by the stabilizer.   We note that non-reduced, non-linearly reductive finite fppf group schemes are still geometrically reductive. 
\end{remark}

\subsection{Examples}  Here we list three examples of non-separated Deligne-Mumford stacks and  give \'etale presentations by quotient stacks by the stabilizer verifying Conjecture \ref{local_quotient_conj}.  In these examples, good moduli spaces do not exist Zariski-locally.  We will work over an algebraically closed field $k$ with $\charr k \neq 2$.

\begin{example}  \label{ex1} Let $G \arr \AA^1$ be the group scheme which has fibers isomorphic to $\ZZ_2$ everywhere except over the origin where it is trivial.  The group scheme $G \arr \AA^1$ is not linearly reductive.  The classifying stack $BG$ does not admit a good moduli space Zariski-locally around the origin although there does exist a coarse moduli space.   The cover $f: \AA^1 \arr BG$ satisfies the conclusion of Conjecture \ref{local_quotient_conj}. The morphism $f$ is stabilizer preserving at the origin but nowhere else.  This example shows that one cannot hope to find \'etale charts $[X/G] \arr \cX$ of quotient stacks of linearly reductive group schemes which are pointwise stabilizer preserving everywhere.
\end{example}

\begin{example} \label{ex2} (\emph{$4$ unordered points in $\PP^1$ modulo $\Aut(\PP^1)$})

Consider the quotient stack $\cX = [\PP(V) / \PGL_2]$ where $V$ is the vector space of degree 4 homogeneous polynomials in $x$ and $y$.  Let $\cU \subseteq \cX$ be the open substack consisting of points with finite automorphism group.  Any point in $p \in \cU$ can be written as $xy(x-y)(x-\lambda y)$ for $\lambda \in \PP^1$.  If $\lambda \neq 0,1$ or  $\infty$, the stabilizer is
$$G_p = \bigg\{ \begin{pmatrix} 1 & 0 \\ 0 & 1 \end{pmatrix} , \begin{pmatrix} 0 & \lambda \\ 1 & 0 \end{pmatrix}, \begin{pmatrix} \lambda & - \lambda \\ 1 & - \lambda \end{pmatrix}, \begin{pmatrix} 1 & -\lambda \\ 1 & -1 \end{pmatrix} \bigg\}$$

When $\lambda = 1$ (resp. $\lambda = 0$, $\lambda = \infty$), the only elements of the stabilizer are the identity and $\begin{pmatrix} 0 & 1 \\ 1 & 0 \end{pmatrix}$, (resp. $\begin{pmatrix} 1 & 0 \\ 1 & -1 \end{pmatrix}, \begin{pmatrix} 1 & -1 \\ 0 & -1 \end{pmatrix}$).  Therefore, the stabilizer group scheme of the morphism $\PP^1 \arr \cU$ is a non-finite group scheme which is $\ZZ_2 \times \ZZ_2 \arr \PP_1$ but with two elements removed over each of the fibers over $0,1$ and $\infty$ (so that the generic fiber is $\ZZ_2 \times \ZZ_2$ and the fibers over $0, 1$ and $\infty$ are $\ZZ_2$.

We give an \'etale presentation around $1$.  Let $\ZZ_2$ act on $X=\AA^1 \setminus \{0\}$ via $\lambda \mapsto 1/\lambda$.  The morphism $f: X \arr \PP^4, \lambda \mapsto [xy(x-y)(x-\lambda y)]$ is $\ZZ_2$ invariant where $\ZZ_2$ acts on $\PP^4$ via the inclusion $\ZZ_2 \hookarr \PGL_2$ defined by $-1 \mapsto \begin{pmatrix}0 & 1 \\ 1 & 0 \end{pmatrix}$.  The induced morphism $[X/\ZZ_2] \arr \cX$ is \'etale and stabilizer preserving at $1$.  However, it is not pointwise stabilizer preserving in a neighborhood of 1.  The $j$-invariant $j: \cU \arr \PP^1, [xy(x-y)(x-\lambda y)] \mapsto [(\lambda^2-\lambda+1)^3, \lambda^2(\lambda-1)^2]$ gives a coarse moduli space.  The morphism $j$ is not separated and $j$ is not a good moduli space (i.e. $j_*$ is not exact on quasi-coherent sheaves). 



\end{example}

The following example due to Rydh shows that coarse moduli spaces (or even categorical quotients) may not exist for non-separated Deligne-Mumford stacks.

\begin{example} \label{ex3}
The Keel-Mori theorem states that any Artin stack $\cX \arr S$ where the inertia stack $I_{\cX/S} \arr \cX$ is finite admits a coarse moduli space. The finiteness of inertia hypothesis cannot be weakened to requiring that the diagonal is quasi-finite.  Let $X$ be the non-separated plane attained by gluing two planes $\AA^2 = \Spec k[x,y]$ along the open set $\{x \ne 0\}$.  The action of $\ZZ_2$ on $\Spec k[x,y]_x$ given by $(x,y) \mapsto (x,-y)$ extends to an action of $\ZZ_2$ on $X$ by swapping and flipping the axis (explicitly, if $X = U_1 \cup U_2$, the multiplication is defined by $\ZZ_2 \times U_1 \arr U_2, (x_2,y_2) \mapsto (x_1, -y_1)$ and $\ZZ_2 \times U_2 \arr U_1, (x_1,y_1) \mapsto (x_2, -y_2)$).  Then $\cX = [X / \ZZ_2]$ is a non-separated Deligne-Mumford stack.  There is an isomorphism $\cX = [\AA^2 / G]$ where $G = \AA^1 \sqcup (\AA^1 \setminus \{0\}) \arr \AA^1$ is the group scheme over $\AA^1$ whose fibers are $\ZZ_2$ over the origin where it is trivial and $G$ acts on $\AA^2 = \Spec k[x,y]$ over $\AA^1 = \Spec k[x]$ by the non-trivial involution $y \mapsto -y$ away from the origin.

Rydh shows in \cite[Example 7.15]{rydh_quotients} that this stack does not admit a coarse moduli space.   In fact, there does not even exist an algebraic space $Z$ and a morphism $\phi: \cX \arr Z$ which is universal for maps to \emph{schemes}.  The above statements are also true for any open neighborhood of the origin. 
\end{example}

The following is a counterexample for Conjecture \ref{local_quotient_conj} if the stabilizer is not linearly reductive.

\begin{counterexample} \label{counterexample}
Over a field $k$, let $G \arr \AA^1$ be a group scheme with generic fiber $\GG_m$ and with a $\GG_a$ fiber over the origin.  Explicitly, we can write $G = \Spec k[x,y]_{xy+1} \arr \Spec k[x]$ with the multiplication $G \times_{\AA^1} G \arr G$ defined by $y \mapsto xyy' + y + y'$.  Let $\cX = [\AA^1 / G]$ be the quotient stack over $\Spec k$ and $x: \Spec k \arr \cX$ be the origin.  The stabilizer $G_x = \GG_a$ acts trivially on the tangent space $\bar F_{x}(k[\epsilon])$.  The nilpotent thickening $\cX_1$ cannot be a quotient stack by $\GG_a$ giving a counterexample to Conjecture \ref{local_quotient_conj}  in the case that the stabilizer is not linearly reductive.
\end{counterexample}

\subsection{Conjecture \ref{local_quotient_conj} is known for certain quotient stacks} \label{conj1_true2}

\begin{theorem} 
  Let $\cX$ be an Artin stack over an algebraically closed field $k$.  Suppose $\cX=[X/G]$ is a quotient stack  and $x \in \cX(k)$ has smooth linearly reductive stabilizer. Suppose that one of the following hold:
 \begin{enumerate1}
 \item  $G$ is a connected algebraic group acting on a regular scheme $X$ separated and of finite type over $\Spec k$, or
 \item $G$ is a smooth linearly reductive algebraic group acting on an affine scheme $X$. 
\end{enumerate1}
Then there exists a locally closed $G_x$-invariant affine $W \hookarr X$ with $w \in W$ such that
$$[W/G_x] \arr [X/G]$$
is affine and \'etale.
\end{theorem}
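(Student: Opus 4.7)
The plan is to treat the two cases separately, with case (2) an essentially immediate application of Luna's étale slice theorem and case (1) requiring a preliminary use of Sumihiro's theorem to reduce to an affine setting.

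For case (2), where $G$ is linearly reductive, $X$ is affine, and the stabilizer $G_x$ is linearly reductive, I would invoke Luna's étale slice theorem directly. Given a lift $w$ of $x$ to $X$, linear reductivity of $G_x$ splits the tangent exact sequence
\[
0 \arr T_w(Gw) \arr T_w X \arr N \arr 0
\]
as $G_x$-representations, and Luna's construction produces a $G_x$-invariant locally closed affine subscheme $W \hookarr X$ through $w$ with $T_w W = N$ such that the map $G \times^{G_x} W \arr X$ is $G$-equivariant, étale, and affine. Passing to quotient stacks yields $[W/G_x] \cong [(G \times^{G_x} W)/G] \arr [X/G]$, which is étale and affine as required.

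For case (1), the plan is to first apply Sumihiro's theorem to reduce to a setting where a Luna-style argument can proceed. Since $X$ is regular (hence normal) and $G$ is connected, Sumihiro's theorem and its extensions for connected algebraic groups acting on normal varieties produce a $G$-stable quasi-projective open neighborhood of the orbit $Gw \cong G/G_x$. More crucially, since $w$ is a $G_x$-fixed point and $G_x$ is linearly reductive, a Sumihiro-type argument for the action of the reductive group $G_x$ on the normal variety $X$ yields a $G_x$-invariant affine open neighborhood $U \subset X$ of $w$. Within $U$ I would then run the slice construction: decompose $T_w X = T_w(Gw) \oplus N$ as $G_x$-representations, use the linearization of the $G_x$-action near the fixed point $w$ to exhibit $U$ as $G_x$-equivariantly étale over $T_w U$, and pull back the linear subspace $N$ to obtain $W \subset U \subset X$ which is $G_x$-invariant, locally closed, affine, and transverse to $Gw$ at $w$.

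The final step is to verify that $[W/G_x] \arr [X/G]$ is étale and affine. Étaleness at $w$ is a tangent-complex calculation (the differential encodes the decomposition $T_w X \cong T_w W \oplus T_w(Gw)$ together with the identity on $\mathfrak{g}_x$), and étaleness on a neighborhood follows by openness of the étale locus; affineness is built into the choice of $W$ and the explicit form of the map $G \times^{G_x} W \arr X$. The principal obstacle is case (1): unlike case (2), neither $X$ nor $G$ is assumed affine or linearly reductive, so Luna's theorem cannot be applied directly, and Sumihiro's theorem is the indispensable tool bridging this gap by providing the $G_x$-invariant affine neighborhood of $w$ needed to set up the slice construction.
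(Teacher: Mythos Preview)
Your overall strategy matches the paper's (case (2) is Luna directly; case (1) is Sumihiro followed by a slice argument), but there is a genuine gap in how you obtain the $G_x$-invariant affine open $U$ around $w$.

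You write that ``a Sumihiro-type argument for the action of the reductive group $G_x$ on the normal variety $X$ yields a $G_x$-invariant affine open neighborhood $U \subset X$ of $w$.'' Sumihiro's theorem, however, requires the acting group to be \emph{connected}, and the hypothesis here is only that $G_x$ is smooth and linearly reductive; $G_x$ may well be disconnected (finite groups, orthogonal groups, etc.). So you cannot invoke Sumihiro for $G_x$ as stated. The paper avoids this by applying Sumihiro only to the connected group $G$: this produces a $G$-invariant open $U_1$ together with a $G$-equivariant immersion $U_1 \hookrightarrow \PP(V)$ for a $G$-representation $V$. Since $G_x$ fixes $x$, it fixes the corresponding line in $V$, and linear reductivity of $G_x$ gives a $G_x$-invariant homogeneous $f$ with $f(x)\neq 0$; the affine $(\overline{U_1})_f$ is then $G_x$-invariant, and a GIT argument (separating the closed $G_x$-invariant subsets $x$ and $\overline{U_1}\setminus U_1$ in the quotient) carves out a $G_x$-invariant affine open $U\subset X$ containing $x$. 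This step is where the connectedness hypothesis on $G$ and the linear reductivity of $G_x$ interact, and it is precisely what your sketch skips.

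A second, smaller omission: after producing $W$ and checking that $[W/G_x]\to[X/G]$ is \'etale at $w$, you say ``\'etaleness on a neighborhood follows by openness of the \'etale locus.'' True, but you then need to shrink $W$ to that locus while keeping it $G_x$-invariant and affine. The paper does this by passing to the good moduli space $W\to W/\!\!/G_x$: the non-\'etale locus and $\{w\}$ have disjoint closed images there, so one can excise the bad locus by pulling back an affine open in $W/\!\!/G_x$. Your sketch does not indicate how this $G_x$-equivariant shrinking is accomplished.
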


\begin{proof}  Part (2) follows directly from Luna's \'etale slice theorem (see \cite{luna} and Section \ref{luna_sect}).  

For part (1), by applying \cite[Theorem 1 and Lemma 8]{sumihiro1}, there exists an open $G$-invariant $U_1$ containing $x$ and an $G$-equivariant immersion $U_1 \hookarr Y= \PP(V)$ where $V$ is a $G$-representation.  Since the action of $G_x$ on $\Spec \Sym^* V^{\dual}$ fixes the line spanned by $x$, there exists a $G_x$-semi-invariant homogeneous polynomial $f$ with $f(x) \neq 0$.  It follows that $V=Y_f \cap U_1$ is a $G_x$-invariant quasi-affine neighborhood of $x$ with $i: V \hookarr \bar V := (\bar{U_1})_f$ is an open immersion and $\bar V$ is affine.  Let $\pi: \bar V \arr \bar V //G_x$ be the GIT quotient.  Since $\bar V \setminus V$ and $x \in V$ are disjoint $G_x$-invariant closed subschemes, $\pi(\bar V \setminus V)$ and $\pi(x)$ are closed and disjoint.  Let $Z \subseteq (\bar V //G_x) \setminus (\pi(\bar V \setminus V))$ be an affine open subscheme containing $x$.  Then $U=\pi^{-1}(Z)$ is a $G_x$-invariant affine open subscheme containing $x$.

The stabilizer acts naturally on $T_x X$ and there exists a $G_x$-invariant morphism $U \arr T_x X$ which is \'etale since $x \in X$ is regular.  Since $G_x$ is linearly reductive, we may write $T_x X = T_x o(x) \oplus W_1$ for a $G_x$-representation $W_1$.  Define the $G_x$-invariant affine $W \subseteq U$ by the cartesian diagram
$$\xymatrix{
W \ar[r] \ar@{_(->}[d]	& W_1 \ar@{_(->}[d] \\
U \ar[r]		& T_x X = T_x o(x) \oplus W_1
}$$
and let $w \in W$ be the point corresponding to $x$.  

The stabilizer $G_x$ acts on $G \times W$ via $h \cdot (g,w) = (gh^{-1}, h \cdot w)$ for $h \in G_x$ and $(g,w) \in G \times W$.  The quotient $G \times_{G_x} W := (G \times W) /G_x$ is affine.  Since the quotient morphism $G \times W \arr G \times_{G_x} W$ is a $G_x$-torsor it follows that $T_{(g,e)} G \times_{G_x} W = (T_{e} G \oplus T_w W) / T_{e} G_x$ where $T_{e} G_x \subseteq T_{e} G \oplus T_w W$ is induced via the inclusion $G_x \arr G \times W, h \mapsto (h^{-1}, h \cdot w)$.  Therefore, $G \times_{G_x} W \arr X$ is \'etale at $(e, w)$.  Furthermore, $G \times_{G_x} W \arr X$ is affine.  It follows that the induced morphism of stacks  $f: [W/G_x] \arr [X/G]$ is affine and \'etale at $w$.  

Let $\phi: [W/G_x] \arr Y=W//G_x$ be the good moduli space corresponding to the GIT quotient $\pi: W \arr W//G_x$.  If $\cZ \subseteq [W/G_x]$ is the closed locus where $f$ is not \'etale, then $\cZ$ is disjoint to $\{w\}$ and it follows that $\phi(\cZ)$ and $\phi(w)$ are closed and disjoint.  Let $Y' \subseteq Y$ is an open affine containing in $Y \setminus \phi(\cZ)$ containing $\phi(w)$ so that $\phi^{-1}(Y') = [W'//G_x]$
where $W' = \pi^{-1}(Y')$ is a $G_x$-invariant affine.  The morphism $f: [W'/G_x] \arr [X/G]$ satisfies the desired properties.
\end{proof}


\section{Actions on deformations}

\subsection{Setup}

Let $\cX$ be a category fibered in groupoids over $\Sch/S$ with $S = \Spec R$.

For an $R$-algebra $A$, an object $a \in \cX(A)$, and a morphism $A' \arr A$ of $R$-algebras, denote by $F_{\cX, a}(A')$ the category of arrows $a \arr a'$ over $\Spec A \arr \Spec A'$ where a morphism $(a \arr a_1) \arr (a \arr a_2)$ is an arrow $a_1 \arr a_2$ over the identity inducing a commutative diagram
$${\def\objectstyle{\scriptstyle}
\def\labelstyle{\scriptstyle}
\xymatrix@=20pt{
	& a_1 \ar[dd] \\
a \ar[ru] \ar[rd]\\
			& a_2
}}$$
Let $\bar F_{\cX, a}(A')$ be the set of isomorphism classes of $F_{\cX,a}(A')$.  When there is no risk of confusion, we will denote $F_a(A') := F_{\cX, a}(A')$ and $\bar F_a(A') = \bar F_{\cX,a}(A')$.

For an $A$-module $M$, denote by $A[M]$ the $R$-algebra $A \oplus M$ with $M^2 = 0$.  

\begin{defn}
We say that $\cX$ is \emph{S1(b)} (resp. \emph{strongly S1(b)}) if for every surjection $B \arr A$ (resp. any morphism $B \arr A$),  finite $A$-module $M$, and arrow $a \arr b$ over $\Spec A \arr \Spec B$, the canonical map
$$\bar F_b(B[M]) \arr \bar F_a(A[M])$$
is bijective.  
\end{defn}

\begin{remark} We are using the notation from \cite{artin_versal} although we are not assuming that $A$ is reduced.  Recall that there is another condition S1(a) such that when both S1(a) and S1(b) are satisfied (called \emph{semi-homogeneity} by Rim), then there exists a miniversal deformation space (or a hull) by \cite{schlessinger} and \cite{rim_equivariant}.  We are isolating the condition S1(b) and strongly S1(b) to indicate precisely what is necessary for algebraicity of the action of the stabilizer on the tangent space.
\end{remark}

\begin{remark}  Any Artin stack $\cX$ over $S$ satisfies the following homogeneity property: for any surjection of $R$-algebras $C' \arr B'$ with nilpotent kernel, $B \arr B'$ any morphism of $R$-algebras, and $b' \in \cX(B')$, the natural functor
\begin{equation} \label{homogeneous}
\cX_{b'}(C' \times_{B'} B) \arr \cX_{b'}(C') \times \cX_{b'}(B)
\end{equation}
is an equivalence of categories (see \cite[Lemma 1.4.4]{olsson_crystalline}).  In particular, any Artin stack $\cX$ over $S$ is strongly S1(b).
\end{remark} 

\bigskip

It is easy to see that if $\cX$ satisfies S1(b), then for any $R$-algebra $A$, object $a \in \cX(A)$ and finite $A$-module $M$, the set $\bar F_a(A[M])$ inherits an $A$-module structure.  In particular, for $x \in 
\cX(k)$, the tangent space $\bar F_x(k[\epsilon])$ is naturally a $k$-vector space.  For any $k$-vector space, the natural identification $\Hom(k[\epsilon], k[V]) \cong V$ induces a morphism
$$\bar F_x(k[\epsilon]) \tensor_k V \arr \bar F_x(k[V])$$
which is an isomorphism for finite dimensional vector spaces $V$. 

\begin{remark} \label{finite_pres} If $\cX$ is also locally of finite presentation, then this is an isomorphism for any vector space $V$ since if we write $V = \dlim V_i$ with $V_i$ finite dimensional then $\dlim \bar F_x(k[V_i]) \arr \bar F_x(k[V])$ is bijective.  
\end{remark}

\subsection{Actions on tangent spaces} \label{inf_action}

For $a \in \cX(A)$, the abstract group $\Aut_{\cX(A)}(a)$ acts on the $R$-module $\bar F_{a}(A[\epsilon])$ via $A$-module isomorphisms:   if $g \in \Aut_{\cX(A)}(a)$ and $(\alpha: a \arr a') \in \bar F_{a}(A[\epsilon])$, then $g \cdot (a \arr a') = (a \stackrel{g^{-1}} {\arr} a \stackrel{\alpha}{\arr} a')$.  

\begin{remark} \label{translation1} For example, suppose $\cX$ is parameterizing flat families of schemes and $X_0 \arr \Spec A$ is an object in $\cX(A)$.  An element $g \in \Aut(X_0)$ acts on infinitesimal deformations via
$$\left(
\vcenter{ \xymatrix   {
X_0 \ar[d] \ar@{^(->}[r]^i		& X \ar[d]^p \\
\Spec A \ar@{^(->}[r]			& \Spec A[\epsilon]
}}
\right) \stackrel{g}{\longmapsto} \left(
\vcenter{ \xymatrix   {
X_0 \ar[d] \ar@{^(->}[r]^{g^{-1} \circ i}		& X \ar[d]^p \\
\Spec A \ar@{^(->}[r]			& \Spec A[\epsilon]
}}
\right)$$
\end{remark}

\bigskip

If $x \in \cX(k)$ with stabilizer $G_x$, we have shown that there is a homomorphism of abstract groups
$$G_x(k) \arr \GL(\bar F_x(k[\epsilon]))(k)$$
We are interested in determining when this is algebraic (i.e. arising from a morphism of group schemes $G_x \arr \GL(\bar F_x(k[\epsilon]))$.
For any $k$-algebra $A$, let $a \in \cX(A)$ be a pullback of $x$.  Note that there is a canonical identification $\Aut_{\cX(A)}(a) \cong G_x(A)$ which induces a homomorphism  
$$G_x(A) \arr \GL(\bar F_{a}(A[\epsilon]))(A)$$

If $\cX$ is strongly S1(b), then using the isomorphism $A[\epsilon] \times_A k \arr k[A]$, we have a bijection $\bar F_x(k[A]) \arr \bar F_{a}(A[\epsilon])$.  The natural maps induce a commutative diagram of $A$-modules
$$\xymatrix{
\bar F_x (k[\epsilon]) \tensor_k A \ar[r] \ar[rd]	& \bar F_x (k[A]) \ar[d] \\
	&  \bar F_{a} (A[\epsilon])
}$$
If $\cX$ is locally of finite presentation over $S$, by Remark \ref{finite_pres}, the top arrow is bijective so that the diagonal arrow is as well.  Therefore, we have a natural homomorphism of groups
$$G_x(A) \arr \GL(\bar F_x (k[\epsilon]) \tensor_k A)(A) = \GL(\bar F_x(k[\epsilon]))(A)$$
for any $k$-algebra $A$ which induces a morphism of group schemes $G_x \arr \GL(\bar F_x(k[\epsilon]))$.

Therefore, if $\cX \arr S$ is locally of finite presentation and is strongly S1(b), then for $x \in \cX(k)$, the stabilizer $G_x$ acts algebraically on $\bar F_x(k[\epsilon])$.  In particular,

\begin{prop}  If $\cX$ is an Artin stack locally of finite presentation over a scheme $S$ and $x \in \cX(k)$, then the stabilizer $G_x$ acts algebraically on the tangent space $\bar F_x(k[\epsilon])$. \epf
\end{prop}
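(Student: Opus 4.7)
The plan is to observe that this proposition is essentially a packaging of the discussion immediately preceding it, specialized to the case of Artin stacks. So the proof is just a matter of verifying the two hypotheses needed for the construction of the algebraic action described in Section \ref{inf_action}: that $\cX$ is locally of finite presentation (given) and that $\cX$ is strongly S1(b).

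First, I would invoke the remark above stating that any Artin stack satisfies the homogeneity property (\ref{homogeneous}) of \cite[Lemma 1.4.4]{olsson_crystalline}; applying it to $C' = B[M]$, $B' = A$, and the map $B \arr A$ immediately gives an equivalence $\cX_a(B[M] \times_A B) \arr \cX_a(B[M]) \times \cX_a(B)$, from which strong S1(b) follows by passing to isomorphism classes of deformations. Second, since $\cX$ is locally of finite presentation, Remark \ref{finite_pres} applies, so for any $k$-algebra $A$ the natural maps
$$\bar F_x(k[\epsilon]) \tensor_k A \arr \bar F_x(k[A]) \arr \bar F_a(A[\epsilon])$$
are bijections, where $a \in \cX(A)$ is a pullback of $x$.

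With these two inputs in hand, I would simply apply the construction from Section \ref{inf_action}: the canonical identification $\Aut_{\cX(A)}(a) \cong G_x(A)$ together with the action of $\Aut_{\cX(A)}(a)$ on $\bar F_a(A[\epsilon])$ by $A$-module automorphisms, composed with the bijection above, yields a homomorphism $G_x(A) \arr \GL(\bar F_x(k[\epsilon]))(A)$ functorial in the $k$-algebra $A$. By Yoneda this is a morphism of group schemes $G_x \arr \GL(\bar F_x(k[\epsilon]))$, whose restriction on $k$-points recovers the abstract action constructed earlier. This is exactly the assertion that $G_x$ acts algebraically on $\bar F_x(k[\epsilon])$.

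There isn't really a hard step here — the technical content was all absorbed into the definition of strong S1(b) and Remark \ref{finite_pres}. The only thing worth being careful about is the compatibility of the $A$-module structure on $\bar F_a(A[\epsilon])$ with the $G_x(A)$-action (so that the map lands in $\GL$ rather than just in a group of abelian group automorphisms), but this compatibility is immediate from the construction since the $A$-module structure is induced by morphisms $A[\epsilon] \arr A[\epsilon]$ over $A$ that are automatically $G_x$-equivariant.
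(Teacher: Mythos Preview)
Your proposal is correct and matches the paper's approach exactly: the proposition is stated with a bare \verb|\qed| because it is an immediate consequence of the preceding discussion, and you have correctly identified the two inputs (strong S1(b) from the homogeneity remark, and the finite-presentation remark) and how they feed into the construction of Section~\ref{inf_action}.

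One small slip: in deriving strong S1(b) from homogeneity you set $C' = B[M]$, but then $C' \to B' = A$ need not be surjective when $B \to A$ is not. The correct substitution is $C' = A[M]$, $B' = A$, with the given $B \to A$; then $A[M] \times_A B \cong B[M]$ and the equivalence $\cX_a(B[M]) \simeq \cX_a(A[M]) \times \cX_a(B)$ gives, on the fiber over $(a \to b) \in \cX_a(B)$, exactly the bijection $\bar F_b(B[M]) \to \bar F_a(A[M])$.
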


\begin{remark}
The above proposition is certainly well known, but we are unaware of a rigorous proof in the literature.  We thank Angelo Vistoli for pointing out the simple argument above.

In \cite[Prop. 2.2]{pinkham}, Pinkham states that if $\cX$ is the deformation functor over an algebraically closed field of an affine variety with an isolated singular point with $\GG_m$-action, then the tangent space $T^1$ inherits an algebraic $\GG_m$-action.  However, it appears that he only gives a homomorphism of algebraic groups $\GG_m(k) \arr \GL(T^1)(k)$.  There are certainly group homomorphisms $k^* \arr \GL_n(k)$ which are not algebraic.

In \cite[p. 220-1]{rim_equivariant}, Rim states that if $\cX$ is category fibered in groupoids over the category $\cB$ of local Artin $k$-algebras with residue field $k$ with $\cX(k) = \{x\}$ which is homogeneous in the sense that (\ref{homogeneous}) is an equivalence for a surjection $C' \arr B'$ and any morphism $B \arr B'$ in $\cB$, then $\bar F_x(k[\epsilon])$ inherits a linear representation.   However, he only shows that there is a homomorphism of algebraic groups $G_x(k) \arr \GL(\bar F_x(k[\epsilon]))(k)$.  While it is clear that there are morphisms of groups $G_x(A) \arr \GL(\bar F_x(k[\epsilon]))(A)$ for local Artin $k$-algebras with residue field $k$, it is not clear to us that this gives a morphism of group schemes $G_x \arr \GL(\bar F_x(k[\epsilon]))$ without assuming a stronger homogeneity property.
\end{remark}

\subsection{Actions on deformations}

Let $\cX$ be an Artin stack over $S$ and suppose $G \arr S$ is a group scheme with multiplication $\mu: G \times_S G \arr G$ acting on a scheme $U \arr S$ via $\sigma: G \times_S U \arr U$.  To give a morphism
$$[U/G] \arr \cX$$
is equivalent to giving an object $a \in \cX(U)$ and an arrow $\phi: \sigma^* a \arr p_2^* a$ over the identity satisfying the cocycle condition $p_{23}^* \phi \, \circ \, (\id \times \sigma)^* \phi = (\mu \times \id)^* \phi$.  We say that \emph{$G$ acts on $a \in \cX(U)$} if such data exists.  (In fact, there is an equivalence of categories between $\cX([U/G])$ and the category parameterizing the above data.)

\begin{remark}  \label{translation2}Suppose $\cX$ is the Artin stack over $\Spec \ZZ$ parameterize smooth curves.  Suppose that we are given a smooth family of curves $X \to U$ (i.e. an objective of $\cX$ over $U$). 
If $G$ acts on the scheme $U$, then giving a morphism $[U/G] \arr \cX$ is equivalent to giving an action of $G$ on $X$ compatible with the action on $U$. 
\end{remark}



\subsection{Action of formal deformations}

Let $\fU$ be a noetherian formal scheme over $S$ with ideal of definition $\fI$.  Set $U_n$ to be the scheme $(|\fU|, \oh_{\fU}/\fI^{n+1})$.   If $\cX$ is an category fibered in groupoids over $\Sch/S$, one defines $\cX(\fU)$ to be the category where the objects are a sequence of arrows $ a_0 \arr a_1 \arr \cdots $ over the nilpotent thickenings $U_0 \hookarr U_1 \hookarr  \cdots$ and a morphism $(a_0 \arr a_1 \arr \cdots) \arr (a_0' \arr a_1' \arr \cdots)$ is a compatible sequence of arrows $a_i \arr a'_i$ over the identity.  One checks that if $\fI$ is replaced with a different ideal of definition, then one obtains an equivalent category.  Given a morphism of formal schemes $p: \fU' \arr \fU$, one obtains a functor $p^*: \cX(\fU) \arr \cX(\fU')$.  

If $G \arr S$ is a group scheme over $S$ with multiplication $\mu$ acting on the formal scheme $\fU$ via $\sigma: G \times_S \fU \arr \fU$ such that $\fI$ is an invariant ideal of definition, we say that  \emph{$G$ acts on a deformation $\hat a = (a_0 \arr a_1 \cdots) \in \cX(\fU)$}, if as above there is an arrow $\phi: \sigma^* \hat a \arr p_2^* \hat a$ in $\cX(G \times_S \fU)$ satisfying the cocycle $p_{23}^* \phi \, \circ \, (\id \times \sigma)^* \phi = (\mu \times \id)^* \phi$.  This is equivalent to giving compatible morphisms $[U_i / G] \arr \cX$.  (Given an appropriate definition of a formal stack $[\fU / G]$, this should be equivalent to giving a morphism $[\fU/G] \arr \cX$.)

\section{Local quotient structure}

We show that for closed points with linearly reductive stabilizer, the stabilizer acts algebraically on the formal deformation space.  In other words, Artin stacks are ``formally locally'' quotient stacks around such points.  This gives a formally local answer to Conjecture \ref{local_quotient_conj}.  We will use the same method as in \cite{tame} to deduce that all nilpotent thickenings are quotient stacks.   

\subsection{Deformation theory of $G$-torsors}

We will need to know the deformation theory of $G$-torsors over Artin stacks.  We recall for the reader the necessary results of the deformation theory of $G$-torsors from \cite{olsson_deformation} and \cite{tame}.

Suppose $G \arr S$ is a fppf group scheme and $p: \cP \arr \cX$ is a $G$-torsor.  Let $i: \cX \arr \cX'$ be a closed immersion of stacks defined by a square-zero ideal $I \subseteq \oh_{\cX'}$.  Then the collection of $2$-cartesian diagrams
$$\xymatrix{
\cP \ar[d]^p \ar@{-->}[r]^{i'} 		& \cP' \ar@{-->}[d]^{p'} \\
\cX \ar[r]^{i}					& \cX'
}$$
with $p': \cP' \arr \cX'$ a $G$-torsor form in a natural way a category.

\begin{prop}  \label{def_torsor}
Let $L_{BG/S}$ denote the cotangent complex of $BG \arr S$ and $f: \cX \arr BG$ be the morphism corresponding to the $G$-torsor $p:\cP \arr \cX$.
\begin{enumeratei}
\item There is a canonical class $o(x,i) \in Ext^1(Lf^* L_{BG/S}, I)$ whose vanishing is necessary and sufficient for the existence of an extension $(i',p')$ filling in the diagram
\item If $o(x,i) = 0$, then the set of isomorphism of extensions filling in the diagram is naturally a torsor under $Ext^0(Lf^* L_{BG/S}, I)$.
\item  For any extension $(i',p')$,  the group of automorphisms of $(i',p')$ (as a deformation of $\cP \arr \cX$) is canonically isomorphic to $Ext^{-1}(Lf^* L_{BG/S}, I)$.
\end{enumeratei} 
\end{prop}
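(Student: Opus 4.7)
The plan is to reduce Proposition \ref{def_torsor} to the standard obstruction theory for lifting a morphism of stacks along a square-zero thickening, by translating the torsor-deformation problem into a lifting problem for morphisms into $BG$. By the defining property of $BG$ as the classifying stack of $G$-torsors, the datum of $p:\cP\arr\cX$ is equivalent to that of the classifying morphism $f:\cX\arr BG$, and completing the $2$-cartesian square with $p':\cP'\arr\cX'$ a $G$-torsor is equivalent to giving a lift $f':\cX'\arr BG$ of $f$ along $i:\cX\hookarr\cX'$. The first step is therefore to verify carefully that this is an equivalence of categories, not just a bijection on isomorphism classes: an isomorphism between two extensions $(\cP'_1,i'_1)$ and $(\cP'_2,i'_2)$ as deformations of $\cP\arr\cX$ corresponds exactly to a $2$-isomorphism $f'_1\Rightarrow f'_2$ whose restriction along $i$ is the identity on $f$. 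This is a formal check using the $2$-Yoneda interpretation of $BG$.

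Once this translation is in place, I would invoke the standard cotangent-complex formalism for lifting morphisms between Artin stacks over $S$, as developed in \cite{olsson_deformation} (the results being used are precisely those recalled in the deformation-theoretic portion of \cite{tame}). That formalism attaches, to any morphism $f:\cX\arr\cY$ over $S$ and any square-zero closed immersion $i:\cX\hookarr\cX'$ with ideal $I$, a canonical obstruction
$$o(f,i)\in\Ext^1_{\cO_\cX}(Lf^*L_{\cY/S},I)$$
whose vanishing is necessary and sufficient for the existence of a lift $f':\cX'\arr\cY$ of $f$; when it vanishes, the set of isomorphism classes of lifts is a torsor under $\Ext^0_{\cO_\cX}(Lf^*L_{\cY/S},I)$; and the $2$-automorphisms of any fixed lift that restrict to the identity over $\cX$ form a group canonically isomorphic to $\Ext^{-1}_{\cO_\cX}(Lf^*L_{\cY/S},I)$. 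Specializing $\cY=BG$ and transporting through the equivalence established in the first step yields (i), (ii), and (iii).

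The main obstacle is the first step: checking that the equivalence between $G$-torsor deformations of $\cP\arr\cX$ and lifts $f'$ of $f$ is genuinely an equivalence of categories, so that torsor structures and automorphism groups match on the nose. This is essentially bookkeeping with the $2$-categorical structure of $BG$, but it has to be done honestly in order for the $\Ext^{-1}$ identification of (iii) to be meaningful. A secondary, but routine, point is to confirm that the formalism of \cite{olsson_deformation} applies to the target $BG$, which is covered by the smooth atlas $S\arr BG$; the cotangent complex $L_{BG/S}$ exists in the derived category of $BG$, and its pullback $Lf^*L_{BG/S}$ computes the desired $\Ext$ groups on $\cX$. With these ingredients in hand the three statements are immediate.
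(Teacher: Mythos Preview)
Your proposal is correct and follows essentially the same route as the paper: the paper's proof is the single sentence ``This is a special case of \cite[Theorem 1.5]{olsson_deformation} with $\cY = \cY' = BG$ and $Z = Z' = S$,'' which is exactly the specialization you describe. Your careful discussion of the equivalence between torsor deformations and lifts of the classifying map is implicit in that citation, so you are simply unpacking what the paper leaves to the reader.
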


\begin{proof} This is a special case of \cite[Theorem 1.5]{olsson_deformation} with $\cY = \cY' = BG$ and $Z = Z' = S$. \end{proof}

\begin{prop} \label{torsor_vanishing} Let $G \arr S$ be an fppf group scheme.  Then
\begin{enumeratei}
\item $L_{BG/S} \in D_{coh}^{[0,1]} (\oh_{BG})$.
\item If $G \arr S$ is smooth, $L_{BG/S} \in D_{coh}^{[1]} (\oh_{BG})$.
\end{enumeratei}
If $G \arr \Spec k$ is linearly reductive and $\cF$ is a coherent sheaf on $BG$, then
\begin{enumeratei} \setcounter{enumi}{2}
\item  $Ext^i(L_{BG/k}, \cF) = 0$ for $i \ne -1, 0$.
\item If $G \arr \Spec k$ is smooth, $Ext^i(L_{BG/k}, \cF) = 0$ for $i \ne -1$.
\end{enumeratei}
\end{prop}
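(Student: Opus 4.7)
The plan is to compute $L_{BG/S}$ by descent from the canonical fppf atlas $\pi: S \to BG$ (corresponding to the trivial $G$-torsor), and then to exploit the semisimplicity of $\QCoh(BG)$ provided by linear reductivity.

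First I would consider the cartesian square
$$\xymatrix{
G \ar[r]^{p_2} \ar[d]_{p_1} & S \ar[d]^\pi \\
S \ar[r]^\pi & BG
}$$
together with the identity section $e: S \to G$ of $p_1$. Since $S \stackrel{\pi}{\to} BG \to S$ is the identity, the fundamental triangle gives $\pi^* L_{BG/S} \simeq L_{S/BG}[-1]$, and flat base change along $\pi$ gives $p_1^* L_{S/BG} \simeq L_{G/S}$. Pulling back along $e$ then yields $L_{S/BG} \simeq e^* L_{G/S}$, so
$$\pi^* L_{BG/S} \simeq e^* L_{G/S}[-1].$$
For (ii), when $G/S$ is smooth we have $L_{G/S} = \Omega^1_{G/S}$ in degree $0$, so $\pi^* L_{BG/S}$ is a locally free sheaf (the coLie algebra $\fg^\vee$) placed in degree $1$, and faithfully flat descent along $\pi$ gives $L_{BG/S} \in D_{coh}^{[1]}$. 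For (i), I would invoke Illusie's result that for a flat, finitely presented morphism (hence for any fppf group scheme $G/S$), $L_{G/S}$ has cohomology concentrated in $[-1,0]$, so that $\pi^* L_{BG/S} \in D_{coh}^{[0,1]}$ and the result for $L_{BG/S}$ follows by descent.

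For parts (iii) and (iv), I would apply the local-to-global Ext spectral sequence
$$E_2^{p,q} = \Ext^p_{\oh_{BG}}(H^{-q}(L_{BG/k}), \cF) \Longrightarrow \Ext^{p+q}(L_{BG/k}, \cF).$$
Because $G \to \Spec k$ is linearly reductive, the global sections functor on $\QCoh(BG)$ is exact, so $\QCoh(BG)$ is semisimple and $\Ext^p_{\oh_{BG}}(\cG,\cF) = 0$ for every pair of coherent sheaves $\cG,\cF$ and every $p > 0$. Hence the spectral sequence collapses to its $p=0$ column, yielding
$$\Ext^i(L_{BG/k}, \cF) \simeq \Hom_{\oh_{BG}}(H^{-i}(L_{BG/k}), \cF).$$
Combined with (i) this vanishes for $i \notin \{-1,0\}$, proving (iii); combined with (ii) it vanishes for $i \ne -1$, proving (iv).

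I expect the main obstacle to be part (i): citing a clean statement that the cotangent complex of an arbitrary fppf group scheme lies in $D_{coh}^{[-1,0]}$, together with justifying flat base change and faithfully flat descent of cotangent complexes in the stacky setting. These are available in Illusie and in Olsson's work on the cotangent complex of Artin stacks, but the hypotheses must be checked. Everything else is formal once the atlas reduction and the semisimplicity of $\QCoh(BG)$ are in place.
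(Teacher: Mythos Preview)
Your overall strategy matches the paper's: use the triangle coming from $S \to BG \to S$ together with flat base change along $\pi$ to reduce to the amplitude of $L_{G/S}$, and then use linear reductivity (semisimplicity of $\QCoh(BG)$, equivalently vanishing of higher cohomology on $BG$) to deduce (iii) and (iv). Parts (ii), (iii), (iv) go through essentially as you describe.

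The gap is in (i). It is \emph{not} true that a flat, finitely presented morphism has cotangent complex concentrated in $D^{[-1,0]}$: flatness gives $H^0(L_{G/S}) = \Omega^1_{G/S}$ but says nothing about $H^{-i}$ for $i \geq 2$. For instance, $\Spec k[x,y]/(x,y)^2 \to \Spec k$ is flat and of finite presentation, yet its cotangent complex is not concentrated in $[-1,0]$ since this algebra is not a complete intersection. The amplitude bound $L_{G/S} \in D^{[-1,0]}_{coh}$ is in fact equivalent to $G \to S$ being a local complete intersection. What makes the proposition work is that an fppf group scheme is automatically lci over its base; this is a genuine fact about group schemes (the fibers are group schemes of finite type over a field, and such group schemes are always lci), not a consequence of flatness and finite presentation alone. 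This is exactly the ``observation that $G \to S$ is a local complete intersection'' recorded in the paper's proof. So the obstacle you anticipated is real, but the fix is not a statement from Illusie about arbitrary flat finitely presented morphisms; it is the fact that fppf group schemes are local complete intersections.
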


\begin{proof} Part (i) and (ii) follow from the distinguished triangle induced by the composition $S \arr BG \arr S$ as in \cite[Lemma 2.18]{tame} with the observation that $G \arr S$ is a local complete intersection. 
Part (iii) is given in the proof of \cite[Lemma 2.17]{tame} and (iv) is clear from (ii).
\end{proof}

\subsection{Proof of Theorem 1}

\begin{proof} We prove inductively that each $\cX_i$ is a quotient stack by $G_x$ using deformation theory.  For (i), let $p_0: U_0 = \Spec k \arr \cX_0$ be the canonical $G_x$-torsor.  Suppose we have a compatible family of $G_x$-torsors $p_i: U_i \arr \cX_i$ with $U_i$ affine.  This gives a 2-cartesian diagram
$$\xymatrix{
U_0 \ar[r] \ar[d]^{p_0} 	& \ldots \ar[r]  & U_{n-1} \ar@{-->}[r]^{j_n} \ar[d]^{p_{n-1}}		& U_n  \ar@{-->}[d]^{p_n}		 \\
\cX_0	\ar[r]	& \ldots \ar[r]  & \cX_{n-1} \ar[r]^{i_n}	& \cX_n 
}$$
By Corollary \ref{def_torsor}, the obstruction to the existence of a $G_x$-torsor $p_n: U_n \arr \cX_n$ restricting to $p_{n-1}: U_{n-1} \arr \cX_{n-1}$ is an element 
$$o \in \Ext^1(Lf^*L_{BG_x/k}, \cI^n)= \Ext^1(L_{BG_x/k}, \cI^n / \cI^{n+1}) = 0$$
where $f: \cX_{n-1} \arr BG_x$ is the morphism defined by $U_{n-1} \arr \cX_{n-1}$ and $\cI$ denotes the sheaf of ideals defining $\cX_0$.  The vanishing is implied by Proposition \ref{torsor_vanishing}(iii).  Therefore, there exists a $G_x$-torsor $U_n \arr \cX_n$ extending $U_{n-1} \arr \cX_{n-1}$.  Since $U_0$ is affine, so is $U_n$ and the $G_x$-torsor $p_n$ gives an isomorphism $\cX_n  \cong [U_n / G_x]$. Furthermore, if $G_x$ is smooth, this extension is unique by Proposition \ref{torsor_vanishing}(iv). 

For (ii), first choose a scheme $S'$ and an \'etale morphism $S' \arr S$ such that $S' \times_S k(s) = k$.  Let $s' \in S'$ denote the preimage of $s$ and $S'_n  = \Spec \oh_{S',s'}/ \fm_{s'}^{n+1}$.  The group scheme $G_0 = G_x \arr \Spec k$ extends uniquely to smooth affine group schemes $G_i \arr S'_n$ (\cite[Expose III, Thm. 3.5]{sga3}) which by \cite[Prop. 3.9(iii)]{alper_good_arxiv} are linearly reductive.  If $\cX' = \cX \times_S S'$, then $BG_x \hookarr \cX'$ is a closed immersion with nilpotent thickenings $\cX'_n$ isomorphic to $\cX_n \times_S S'$.  Let $p_0: U_0 = \Spec k \arr \cX_0$ be the canonical $G_x$-torsor which we may also view as a torsor over $G_n \arr S'_n$.  Suppose we have a compatible family of $G_n \arr S_n$-torsors $p_i: U_i \arr \cX_i$ with $U_i$ affine.  This gives a 2-cartesian diagram
$$\xymatrix{
U_0 \ar[r] \ar[d]^{p_0} 	& \ldots \ar[r]  & U_{n-1} \ar@{-->}[r]^{j_n} \ar[d]^{p_{n-1}}		& U_n  \ar@{-->}[d]^{p_n}		 \\
\cX'_0	\ar[r]	& \ldots \ar[r]  & \cX'_{n-1} \ar[r]^{i_n}	& \cX'_n 
}$$
of Artin stacks over $S'_n$.  By Corollary \ref{def_torsor}, the obstruction to the existence of a $G_x$-torsor $p_n: U_n \arr \cX'_n$ restricting to $p_{n-1}: U_{n-1} \arr \cX'_{n-1}$ is an element 
$$o \in \Ext^1(Lf^*L_{BG_n/S_n}, \cI^n)= H^2(BG_x, \mathfrak{g} \tensor \cI^n / \cI^{n+1}) = 0$$
where $f: \cX'_{n-1} \arr BG_x$ is the morphism defined by $U_{n-1} \arr \cX'_{n-1}$.  Since the set of extensions is $H^1(BG_x, \mathfrak{g} \tensor \cI^n / \cI^{n+1}) = 0$, there is a unique extension $p_n: U_n \arr \cX'_n$.
\end{proof}

\begin{cor} Let $\cX$ be a locally noetherian Artin stack over $\Spec k$ and $\xi \in |\cX|$ be a closed point with smooth, affine and linearly reductive stabilizer.  Let  $x: \Spec k \arr \cX$ be a representative of $\xi$. Then there exists a miniversal deformation $(A, \hat{\xi})$ of $x$ with $G_x$-action, which is unique up to $G_x$-invariant isomorphism.  
\end{cor}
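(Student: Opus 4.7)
I would apply Theorem \ref{local_quotient_thm}(i) and pass to the inverse limit.

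First, invoke Theorem \ref{local_quotient_thm}(i) to obtain a compatible system of $G_x$-torsor presentations $\cX_n \cong [U_n/G_x]$ with each $U_n = \Spec A_n$ affine and with $U_n \to \cX_n$ restricting over $\cX_0 = BG_x$ to the canonical torsor $U_0 = \Spec k \to BG_x$. Each $U_n$ is then a nilpotent thickening of $\Spec k$, so each $A_n$ is an Artin local $k$-algebra with residue field $k$, and the inclusions $U_{n-1} \hookrightarrow U_n$ induce surjections $A_n \twoheadrightarrow A_{n-1}$. Set $A := \varprojlim A_n$, a complete local $k$-algebra with residue field $k$; it is Noetherian because $\mathfrak{m}_A/\mathfrak{m}_A^2 = \mathfrak{m}_{A_1}/\mathfrak{m}_{A_1}^2$ is automatically finite-dimensional over $k$ (as $A_1$ is an Artin local $k$-algebra). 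The compatible $G_x$-actions on the $U_n$ equip $\Spf A$ with a $G_x$-action, and the composites $\Spec A_n \to U_n \to [U_n/G_x] \cong \cX_n \hookrightarrow \cX$ assemble into a formal object $\hat{\xi} \in \cX(\Spf A)$ on which $G_x$ acts in the sense of Section 4.4.

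Next I would verify miniversality. Since $G_x$ fixes the closed point $x_0 \in U_0$, the infinitesimal action $\Lie(G_x) \to T_{x_0} U_1$ vanishes, so the smooth torsor map $U_1 \to \cX_1 \hookrightarrow \cX$ induces an isomorphism $\mathfrak{m}_A/\mathfrak{m}_A^2 = T_{x_0} U_1 \iso \bar F_x(k[\epsilon])$, which is precisely the tangent map of $\hat{\xi}$. For versality, fix a local Artin $k$-algebra $B$ with residue field $k$ and a deformation $\eta \in \cX(B)$ extending $x$; then $\eta$ factors through some $\cX_n$, and the presentation $\cX_n \cong [U_n/G_x]$ yields a $G_x$-torsor $P \to \Spec B$ whose restriction to $\Spec k$ is trivialized by the isomorphism $\eta|_{\Spec k} \iso x$. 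By Proposition \ref{torsor_vanishing} (via the vanishing of $\Ext^0(L_{BG_x/k}, \cdot)$ in the smooth case, together with a parallel deformation-theoretic analysis for general linearly reductive $G_x$), $P$ is itself trivial; a compatible trivialization lifts $\eta$ to $\Spec B \to U_n$ and hence to a continuous $k$-algebra map $A \to B$ realizing $\eta$ as the pullback of $\hat{\xi}$.

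For uniqueness when $G_x$ is smooth, a second miniversal deformation $(A', \hat{\xi}')$ with $G_x$-action yields, upon truncation, a second tower of $G_x$-torsor presentations $\cX_n \cong [\Spec A'/\mathfrak{m}_{A'}^{n+1}/G_x]$; the uniqueness clause of Theorem \ref{local_quotient_thm}(i) then supplies compatible $G_x$-equivariant isomorphisms whose inverse limit is the desired $G_x$-equivariant isomorphism $A \iso A'$ identifying $\hat{\xi}$ with $\hat{\xi}'$. I expect the main obstacle to be the verification of this last identification---namely, that an abstract miniversal deformation with $G_x$-action genuinely supplies a tower of torsor \emph{presentations} of the $\cX_n$ (so that the induced morphisms $[\Spec A'/\mathfrak{m}_{A'}^{n+1}/G_x] \to \cX_n$ are isomorphisms and not merely morphisms), which is what allows the $G_x$-equivariant uniqueness of Theorem \ref{local_quotient_thm}(i) to be invoked.
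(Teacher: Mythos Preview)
Your approach is essentially the paper's: invoke Theorem~1(i), pass to the inverse limit of the $U_n$, and declare the result miniversal with its inherited $G_x$-action. The paper's proof is a single sentence (``follows directly from the above theorem with the observation that $\varinjlim U_i \to \cX$ is a miniversal deformation'') and does not spell out the miniversality or the uniqueness; you supply those details.

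A few remarks on the details you add. Your tangent-space argument invokes ``the smooth torsor map $U_1 \to \cX_1$'', but this map is smooth only when $G_x$ is smooth; in the general linearly reductive case the torsor is merely fppf, and the identification $T_{x_0}U_1 \cong \bar F_x(k[\epsilon])$ is not immediate (indeed, for $\cX = B\mu_p$ in characteristic $p$ one has $U_n = \Spec k$ for all $n$ while $\bar F_x(k[\epsilon])$ is one-dimensional). The paper's one-line proof glosses over this as well. Your versality argument likewise rests on triviality of the pulled-back torsor, which is clean for smooth $G_x$ (lift a section through a nilpotent thickening) but is only gestured at otherwise.

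Your self-identified concern about uniqueness is real and is not addressed by the paper either: the paper only proves ``the first statement'' (existence). To deduce $G_x$-equivariant uniqueness from Theorem~1(i) one does need to know that an abstract miniversal $(A',\hat\xi')$ with $G_x$-action yields honest $G_x$-torsor presentations $[\Spec(A'/\fm^{n+1})/G_x] \iso \cX_n$, and this requires an argument (e.g.\ that the induced maps are \'etale, representable, surjective, and stabilizer preserving, hence isomorphisms onto the thickenings). So your caution there is well placed.
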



\begin{proof}  The first statement follows directly from the above theorem with the observation that $\dlim U_i \arr \cX$ is a miniversal deformation. \end{proof}

\begin{remark}  The action of $G_x$ on $\Spf A$ fixes the maximal ideal so we get an induced \emph{algebraic} action of $G_x$ on $(\fm / \fm)^{\dual}$.  The miniversality of $\xi$ gives an identification of $k$-vector spaces  $\Psi: (\fm / \fm^2)^{\dual} \iso \bar F_x(k[\epsilon])$ which we claim is $G_x$-equivariant.

The map $\Psi$ is defined as follows:  if  $\tau: \Spec k[\epsilon] \arr \Spec A / \fm^2$ there is an induced diagram
$$\vcenter{\xymatrix{
x \ar[r] \ar[rd]	&	\tau^* \xi_1 \ar[d] 	\ar[d]\\
			& \xi_1	
}}
\qquad \text{ over } \qquad
\vcenter{\xymatrix{
 \Spec k \ar[r] \ar[rd]	& \Spec k[\epsilon] \ar[d]\\
				& \Spec A / \fm^2
}}$$
then $\Psi(\tau) = (x \arr \tau^* \xi_1)$.  The action of $G_x$ on $\bar F_x(k[\epsilon])$ is given in Section \ref{inf_action}.  Under the identification $(\fm / \fm^2)^{\dual} \cong \bar F_{U, u}(k[\epsilon])$ where $U = \Spec A/\fm^2$ and $u: \Spec k \arr U$ is the closed point, then $G_x$-action on $\bar F_{U, u}(k[\epsilon])$ can be given explicitly:  If $p: \Spec B \arr \Spec k$, then an element $g \in G_x(R)$  gives a $B$-algebra isomorphism $\alpha_g: A/\fm^2 \tensor_k B \arr A/\fm^2 \tensor_k B$ and an element $\sigma \in \bar F_{U, u}(k[\epsilon])$ corresponds to a $B$-module homomorphism $A/\fm^2 \tensor_k B$ and $g \cdot \sigma \in  \bar F_{U, u}(k[\epsilon])$ is the $B$-module homomorphism corresponding to the composition $A/\fm^2 \tensor_k B \stackrel{\alpha_{g}^{-1}}{\arr} A/\fm^2 \tensor_k B \arr B$.

We also note that if $p: \Spec B \arr \Spec k$, then under the isomorphisms given in Section \ref{inf_action}, we have a commutative diagram
$$\xymatrix{
 \bar F_{U, u}(k[\epsilon]) \tensor_k B \ar[r]^{\sim} \ar[d]^{\Psi \tensor_k B}	& \bar F_{U, p^*u}(B[\epsilon]) \ar[d]^{\Psi_B} \\
 \bar F_{\cX, x}(k[\epsilon]) \tensor_k B \ar[r]^{\sim}	& \bar F_{\cX, p^*x}(B[\epsilon]) \\
 }$$
where for $(\tau: \Spec B[\epsilon] \arr U) \in \bar F_{U, p^*u}(B[\epsilon]) $, $\Psi_B(\tau) = (p^*x \arr \tau^* \xi_1)$.

For $g \in G_x(B)$ and $(\tau: \Spec B[\epsilon] \arr U) \in \bar F_{U, u}(B[\epsilon])$, the pullback of the cocycle $\phi: \sigma^* \xi_1 \arr p_2^* \sigma$ (defining the $G_x$-action on $\xi_1$) under the morphism $(g,\id): \Spec B \times_k U \arr G_x \times_k U$ gives an arrow $\beta$ making a commutative diagram
$$ \vcenter{\xymatrix{
p^*x \ar[r]^{g} \ar[d]	& p^*x \ar[d] \\
p_2^* \xi_1 \ar[r]^{\beta}	& p_2^* \xi_1
}}
\qquad \text{ over } \qquad
\vcenter{\xymatrix{
\Spec B \ar[r]^{=} \ar[d]	& \Spec B \ar[d] \\
\Spec B \times_k U \ar[r]^{\alpha_g}	& \Spec B \times_k U
}}$$
We have a commutative diagram
$$\vcenter{\xymatrix{
p^*x \ar[r]^{g} \ar[rd] \ar[rdd]	& p^*x \ar[rd] \ar[rdd] \\
					& \tau^* \xi_1 \ar[r]^{\gamma} \ar[d]		& (\alpha_g \circ \tau)^* \xi_1 \ar[d]	\\
				& p_2^* \xi_1 \ar[r]^{\beta}			& p_2^* \xi_1	
}} \qquad \text{ over } \qquad
\vcenter{\xymatrix{
\Spec B \ar[r]^{=} \ar[rd] \ar[rdd]	& \Spec B \ar[rd] \ar[rdd]\\
	& \Spec B[\epsilon] \ar[r]^{=} \ar[d]^{\tau}		& \Spec B[\epsilon] \ar[d]^{\alpha_g \circ \tau}\\
	& \Spec B \times_k U \ar[r]^{\alpha_g}				& \Spec B \times_k U
}}
$$
where $\gamma: \tau^* \xi_1 \arr (g \circ \tau)^* \xi_1$ is the unique arrow making the bottom square commute.  The arrow $\gamma$ identifies $g \cdot \Psi(\tau) = (p^*x \stackrel{\alpha_{g}^{-1}}{\arr} p^*x \arr \tau^* \xi_1)$ and $\Psi(g \cdot \tau) = (x \arr (\alpha_g \circ \tau)^* \xi_1)$. 
\end{remark}

\section{Luna's \'etale slice theorem} \label{luna_sect}

In this section, we recover Luna's \'etale slice theorem.  Many of the ingredients of the proof are stacky versions of Luna's methods.  However, we believe that using stacks allows for a more streamlined proof.  In \cite{luna}, it was necessary to prove and apply a $G$-equivariant version of Zariski's Main Theorem; we simply apply Zariski's Main Theorem for Artin stacks.  We remark that the method to prove \'etaleness of the induced map on quotients is different.  We apply a general result which gives sufficient conditions for an \'etale morphism of Artin stacks to induce an \'etale morphism on good moduli spaces (Theorem \ref{etale_preserving}) while Luna reduces to the case where $x \in X$ is normal so that  \'etaleness of the map between the quotients is equivalent to the morphism being unramified and injective on stalks, both of which can be checked algebraically.  We therefore have no normality assumptions in the fundamental lemma (Theorem \ref{fund_lem}).

\subsection{Equivariant linearizations}
A group action on a scheme $X$ affine and smooth over $S$ can be $S_X$-linearized if $S_X \arr X$ is linearly reductive:

\begin{lem} \label{equivariant_linearization}
Let $S$ be an affine scheme and $G \arr S$ be an fppf affine group scheme acting on a scheme $p: X \arr S$ with $p$ affine.  Let $f: T \arr X$ and suppose that the stabilizer $G_f \arr T$ is linearly reductive and $p$ is smooth at points in $f(T)$.  The stabilizer $G_f$ acts naturally on the $T$-schemes $X \times_S T$ and the pullback of the tangent bundle $T_{X/S} \times_X T$.  There exists a non-canonical $G_f$-equivariant morphism 
$$\xymatrix{
X \times_S T\ar[rd]^{p_{2}} \ar[r]		&	 T_{X/S} \times_X T \ar[d] \\
							& T
}$$
\end{lem}

\begin{proof}  The $G_f$ action on $X \times_S T = \sSpec f^* (p_2)_* \oh_{X \times_S X}$ and $T_{X/S} \times_X T = \sSpec \Sym^* f^* \Omega_{X/S}$ is induced from $G_f$-actions on the $\oh_T$-modules $f^* (p_2)_* \oh_{X \times_S X}$ and $f^* \Omega_{X/S}$.    To give an $G_f$-equivariant $T$-morphism $X \times_S T \arr T_{X/S} \times_X T$ it suffices to give a $G_f$-equivariant morphism of $G_f$-$\oh_T$-modules $f^* \Omega_{X/S} \arr f^* (p_2)_* \oh_{X \times_S X}$.  Let $\cI$ be the sheaf of ideals in $\oh_{X \times_S X}$ defining $\Delta: X \hookarr X \times_S X$.   There is a surjection $(p_2)_* \cI \arr \cI/\cI^2$ inducing an exact sequence
$$0 \arr \cK \arr f^* (p_2)_*  \cI \arr f^*(\cI/ \cI^2) \arr 0$$
of $G_f$-$\oh_T$-modules.  We may consider any $G_f$-$\oh_T$-module as an $\oh_{BG_f}$-module.  By observing that $f^* \Omega_{X/S}  \cong f^*(\cI/\cI^2)$ is locally free and applying $\Hom_{BG_f}(f^*(\cI/\cI^2), \cdot)$ 
$$\Ext^1_{BG_f}(f^*(\cI/\cI^2), \cK) = H^1(BG_f,  \cK \tensor f^*(\cI/\cI^2)^\dual) = 0$$
the sequence above splits.  Therefore there is an $G_f$-equivariant morphism $f^*(\cI/\cI^2) \arr f^*(p_2)_* \cI \arr f^* (p_2)_*\oh_{X \times_S X}$. \end{proof}

\begin{remark}  In general (with $f: T \arr X$ and $X \arr S$ affine and smooth at $f(T)$), there exists non-canonically an $T$-morphism $X \times_S T \arr T_{X/S} \times_X T$.  The hypothesis that $G_f \arr T$ is linearly reductive guarantees that this morphism can be constructed $G_f$-equivariantly.
\end{remark}

By applying the lemma with  $T = X$ and $f = \id$, we see that if the stabilizer $S_X \arr X$ is linearly reductive, then there is an $S_X$-invariant $X$-morphism $\Psi: X \times_S X \arr T_{X/S}$.  Suppose that $S = \Spec k$ and $x: \Spec k \arr X$.  Then the base change of the $X$-morphism $\Psi$ by $x: \Spec k \arr X$ yields a  $G_x$-invariant morphism $X \arr T_x$.  In this case, a smoothness hypothesis is not necessary to find a $G_x$-equivariantly linearization around point with linearly reductive stabilizer.  Of course, the induced morphism $X \arr T_x$ is \'etale only when $x \in X$ is smooth.

\begin{lem}  (\cite[Lemma on p. 96]{luna}) Suppose $G$ is an affine group scheme of finite type over a field $k$ acting on an affine scheme $X$ over $k$.  If $x \in X(k)$ is closed point with linearly reductive stabilizer $G_x$, then there is a linear action of $G_x$ on the tangent space $T_x$ and a $G_x$-equivariant morphism $X \arr T_x$ sending $x$ to the origin and inducing an isomorphism on tangent spaces.
\end{lem}

\begin{proof} Let $m \subseteq A$ be the maximal ideal of $x$.  Since $x$ is a fixed point under the induced action by $G_x$ on $\Spec A$, there is a dual action of $G_x$ on the $k$-vector space $m$ and a $G_x$-invariant map $m \arr m/m^2$.  There exists a finite dimensional $G_x$-invariant subspace $V' \subseteq m$ such $V' \twoheadrightarrow m/m^2$.  Since $G_x$ is linearly reductive, there is a $G_x$-invariant subspace $V \subseteq m$ with $V \iso m/m^2$.  This gives a homomorphism of rings
$$ \Sym^* m/m^2 \iso \Sym^*V \arr A $$
which induces the desired $G_x$-invariant morphism $X \arr T_x X$. \end{proof}

\begin{example} There are group actions on affine space that are not linear.  For instance, consider $\ZZ_2$ acting on $\AA^2$ by the involution $x \mapsto -x, y \mapsto -y + x^2$ with $x$ the origin. The $k$-vector space $\langle x,y \rangle \subseteq k[x,y]$ is not $G_x$-invariant but it is contained in the $G_x$-invariant $k$-vector space $\langle x,y,x^2 \rangle$.  This contains a $G_x$-invariant subspace $\langle x,y-\frac{1}{2}x^2 \rangle$ which maps $G_x$-equivariantly onto $m/m^2$.
\end{example}

\subsection{Descent of \'etaleness to good moduli spaces}

We begin by recalling a generalization of \cite[Lemma 1 on p.90]{luna} which gives sufficient criteria for when an \'etale morphism of Artin stacks induces an \'etale morphism of good moduli spaces.

\begin{thm} (\cite[Theorem 5.1]{alper_good_arxiv}) \label{etale_preserving}
Consider a commutative diagram $$\xymatrix{ 
\cX \ar[r]^f \ar[d]^{\phi}		& \cX' \ar[d]^{\phi'} \\
Y \ar[r]^g					& Y'
}$$
where $\cX, \cX'$ are locally noetherian Artin stacks, $g$ is locally of finite type, $\phi, \phi'$ are good moduli spaces and $f$ is representable.  Let $\xi \in |\cX|$.  Suppose

\begin{enumeratea}
\item  $f$ is \'etale at $\xi$.
\item $f$ is stabilizer preserving at $\xi$.
\item $\xi$ and $f(\xi)$ are closed.
\end{enumeratea}
Then $g$ is \'etale at $\phi(\xi)$.
\end{thm}

\begin{cor} \label{etale_preserving_cor}
Consider a commutative diagram $$\xymatrix{ 
\cX \ar[r]^f \ar[d]^{\phi}		& \cX' \ar[d]^{\phi'} \\
Y \ar[r]^g					& Y'
}$$
with $\cX, \cX'$ locally noetherian Artin stacks of finite type over $S$, $g$ locally of finite type, and $\phi, \phi'$ good moduli spaces.  If $f$ is \'etale, pointwise stabilizer preserving and weakly saturated, then $g$ is \'etale.
\end{cor}

\begin{proof}
It suffices to check that $g$ is \'etale at closed points $y \in Y$.  There exists a unique closed point $\xi \in|\cX|$ above a closed point $y \in |Y|$.  The image $s \in S$ is locally closed and we may assume it is closed.  Since $f$ is weakly saturated, by base changing by $\Spec k(s) \arr S$, we have that $\cX_s \arr \cX'_s$ maps closed points to closed points so that $f(\xi) \in |\cX'_s|$ is closed and therefore $f(\xi) \in |\cX'|$ is closed.  It follows from the above theorem that $g$ is \'etale at $y$.  
\end{proof}

We will need the following generalization of \cite[Lemma p.89]{luna}.  Note that here we replace the hypothesis in \cite[Proposition 6.4]{alper_good_arxiv} that $f$ maps closed points to closed points with the  hypothesis that $f$ is weakly saturated.

\begin{prop}   \label{finite_prop}
Suppose $\cX, \cX'$ are locally noetherian Artin stacks and
$$\xymatrix{
\cX \ar[r]^f \ar[d]^{\phi'}		& \cX' \ar[d]^{\phi} \\
Y \ar[r]^{g}					& Y'
}$$
is commutative with $\phi,\phi'$ good moduli spaces.  Suppose
\begin{enumeratea}
\item $f$ is representable, quasi-finite and separated.
\item $g$ is finite
\item $f$ is weakly saturated.
\end{enumeratea}
Then $f$ is finite. 
\end{prop}

\begin{proof}
We may assume $S$ and $Y'$ are affine schemes.  Furthermore, $\cX \arr Y \times_{Y'} \cX'$ is representable, quasi-finite, separated and weakly saturated so we may assume that $g$ is an isomorphism.  By Zariski's Main Theorem (\cite[Thm. 16.5]{lmb}), there exists a factorization
$$\xymatrix{
\cX \ar[r]^I \ar[rd]^f		& \cZ \ar[d]^{f'}\\
					& \cX'
}$$
where $I$ is a open immersion, $f'$ is a finite morphism and $\oh_{\cZ} \hookrightarrow I_* \oh_{\cX}$ is an inclusion.  Since $\cX'$ is cohomologically affine and $f'$ is finite, $\cZ$ is cohomologically affine and admits a good moduli space $\varphi: \cZ \arr Y$.  

Since $f$ is weakly saturated, $I$ is weakly saturated.  Since $\cX$ and $\cZ$ admit the same good moduli space, by Remark \ref{saturated_open}, $I$ must be an isomorphism.  
\end{proof}


\begin{prop} \label{square_prop}
Suppose $\cX, \cX'$ are locally noetherian Artin stacks and 
$$\xymatrix{
\cX \ar[r]^f \ar[d]^{\phi'}		& \cX' \ar[d]^{\phi} \\
Y \ar[r]^{g}					& Y'
}$$
is a commutative diagram with $\phi,\phi'$ good moduli spaces.  If $f$ is representable, separated, \'etale, stabilizer preserving and weakly saturated, then $g$ is \'etale and the diagram is cartesian.
\end{prop}

\begin{proof}
Since $f$ is representable, separated, quasi-finite, stabilizer preserving and weakly saturated, so is $\Psi: \cX \arr \cX' \times_{Y'} Y$.  By Proposition \ref{finite_prop}, $\Psi$ is finite.  Moreover, by Corollary \ref{etale_preserving_cor}, $g$ is \'etale, and therefore so is $\Psi$.   Therefore, $\Psi: \cX \to \cX' \times_{Y'} Y$ is a finite, \'etale morphism between Artin stacks which both have $Y$ has a good moduli space but since $\Psi$ is also stabilizer preserving, it follows that $\Psi$ has degree $1$ and is therefore an isomorphism.
\end{proof}

\begin{remark}  The conditions above that $f$ is stabilizer preserving and weakly saturated are necessary (even if one requires further that $g$ is \'etale).  Indeed, both the open immersion $\Spec k \to [\AA^1 / \GG_m]$ and the \'etale presentation $\Spec k \to B G$ for a finite group $G$ induce isomorphisms on good moduli spaces but the corresponding diagrams are not cartesian.
\end{remark}

\subsection{The fundamental lemma}
The fundamental lemma expands on Theorem \ref{etale_preserving} by guaranteeing that after shrinking Zariski-locally on the good moduli spaces, one has \'etaleness everywhere and that the induced square is even cartesian.  Although we will only use the lemma in the case when $\cX, \cX'$ are quotient stacks, we would like to stress precisely where the quotient stack structures are used in Luna's slice theorem as to emphasize the difficulties at proving Conjecture \ref{local_quotient_conj} in general.

\begin{thm}  \label{fund_lem} Consider a commutative diagram 
$$\xymatrix{ 
\cX \ar[r]^f \ar[d]^{\phi}		& \cX' \ar[d]^{\phi'} \\
Y \ar[r]^g					& Y'
}$$
with $\cX, \cX'$ locally noetherian Artin stacks over a scheme $S$ and $\phi, \phi'$ good moduli spaces with $Y$ and $Y'$ algebraic spaces.   Suppose $f$ is representable and separated, and both $f$ and $g$ are locally of finite type.  Suppose:

\begin{enumeratea}
\item $f$ is stabilizer preserving at $\xi$
\item  $f$ is \'etale at $\xi$.
\item $\xi$ and $f(\xi)$ are closed.
\end{enumeratea}
Then there exist a Zariski sub-algebraic space  $Y_1 \subseteq Y$ such that $f|_{\phi^{-1}(Y_1)}$ is \'etale, $g|_{Y_1}$ is \'etale, and the diagram
\begin{equation} \label{cartesian-fundamental}
\xymatrix{ 
\phi^{-1}(Y_1)\ar[r]^f \ar[d]^{\phi}		& \cX' \ar[d]^{\phi'} \\
Y_1 \ar[r]^g					& Y'
}
\end{equation}
is cartesian. 
 If $Y$ and $Y'$ are schemes such that $Y'$ has affine diagonal, then $Y_1$ and $g(Y_1)$ can be chosen to be affine.
\end{thm}

\begin{proof}
The hypotheses imply by Theorem \ref{etale_preserving} that $g$ is \'etale at $\phi(\xi)$.  The closed subset of $|\cX|$ 
$$Z = \{\eta \in |\cX| \big| f \textrm{ is not \'etale at } \eta \textrm{ or }  g \textrm{ is not \'etale at } \phi( \eta) \}$$
is disjoint from the closed subset $\{\xi\}$.  Therefore, $U=Y \setminus \phi(Z)$ is an open sub-algebraic space containing $\phi(\xi)$ such that $f|_{\phi^{-1}(U)}$ and $g|_{U}$ are \'etale.  So we may assume that $f$ and $g$ are \'etale.

We now shrink further to ensure that $f$ is weakly saturated.  By Zariski's Main Theorem (\cite[Thm. 16.5]{lmb}), there exists a factorization $f: \cX \hookarr \cW \to \cX'$ such that $\cX \hookarr \cW$ is an open immersion and $\cW \to \cX'$ is finite.  Since $\cW \to \cX$ is finite, there exists a good moduli space $\varphi: \cW \to W$.  Since $f(\xi) \in |\cX'|$ is closed, $\xi \in |\cW|$ is closed.  Therefore, $\{\xi\}$ and $\cW \setminus \cX$ are disjoint closed substacks of $\cW$, which implies that $\xi \notin \cZ:=\varphi^{-1}(\varphi(\cW \setminus \cX))$.  Set $U = Y \setminus \phi(\cX \cap \cZ)$.  Then $\phi^{-1}(U) = \cW \setminus \cZ$ which is a saturated open substack of $\cW$.  Since finite morphisms are weakly saturated, the composition $\phi^{-1}(U) \to \cW \to \cX'$ is weakly saturated.
Therefore, we have a commutative diagram
$$\xymatrix{ 
\phi^{-1}(U) \ar[r]^f \ar[d]^{\phi}		& \cX' \ar[d]^{\phi'} \\
U \ar[r]^g					& Y'
}$$
where $f$ is representable, separated, \'etale, and weakly saturated and $g$ is \'etale.  Now $\Psi: \phi^{-1}(U) \to U \times_{Y'} \cX'$ is a representable, separated and quasi-finite morphism of Artin stacks both having $U$ as a good moduli space.  By Proposition \ref{finite_prop}, $\Psi$ is finite.  Since $f$ is stabilizer preserving at $\xi$, so is $\Psi$; it follows that $\Psi$ is a finite \'etale morphism of degree $1$ at $\xi$.  The open substack $\cU' \subseteq \phi^{-1}(U)$ where $\Psi$ is an isomorphism contains $\xi$.  By setting $Y_1 = U \setminus \phi( \phi^{-1}(U) \setminus \cU')$,  we obtain a diagram as in (\ref{cartesian-fundamental}) which is cartesian.

For the final statement, if $Y$ is a scheme, then in the above argument we can choose $Y_1$ to be affine.  If $Y'$ is a scheme with affine diagonal, then we can choose $Y'_1$ to be an open affine subscheme of $g(Y_1)$ and it follows that $Y_1 \cap g^{-1}(Y'_1)$ is also affine.
\end{proof}

\subsection{Proof of Theorem \ref{luna_thm}}

\begin{proof}  For (i), since the orbit $o(f) \hookarr X$ is closed, by \cite[Theorem 12.14]{alper_good_arxiv}, the stabilizer $G_f = S_X \times_X S$ is linearly reductive over $S$.  Lemma \ref{equivariant_linearization} gives an $G_f$-invariant $S$-morphism 
$$g: X \arr T_{X/S} \times_X S$$
For $s \in S$, the induced morphism on fibers is $g_s: X_s \arr T_{X_s,f(s)}$ which induces an isomorphism on tangent spaces at $f(s)$.  Since $X_s \arr \Spec k(s)$ is smooth, $g_s$ is \'etale at $f(s)$.  Since $X \arr S$ is flat at points in $f(S)$, by fibral flatness (\cite[IV.11.3.10]{ega}), $g$ is flat at points in $f(S)$.  Since the property of being unramified can be checked on fibers, $g$ is \'etale at points in $f(S)$.  
Furthermore, $o(f) \arr S$ is smooth and there is an $G_f$-equivariant inclusion $T_{o(f)/S} \times_X S\subseteq T_{X/S} \times_X S$.  Since $G_f \arr S$ is linearly reductive and $X \arr S$ is smooth over $f(S)$, there is a decomposition of $\oh_{BG_f}$-modules
$$f^* \Omega_{X/S} \cong f^* \Omega_{o(f)/S} \oplus \cF$$
and $N = \sSpec_S \Sym^* \cF \subseteq T_{X/S} \times_X S$ is a space normal to the tangent space of the orbit $T_{o(f)/S} \times_X S$ inheriting a $G_f$-action.  If we define $W = g^{-1}(N)$, then $W$ is a $G_f$-invariant closed subscheme of $X$ and $f: S \arr X$ factors as a composition $w: S \arr W$ and $W \hookarr X$. If $\cW = [W/G_f]$ and $\cN = [N/G_f]$, the induced maps
$$\xymatrix{
		& \cW \ar[rd] \ar[ld] \\
\cN		&				& \cX
}$$
are \'etale at $w(S)$.  By applying the Fundamental Lemma \ref{fund_lem}, we have established (i).  

For (ii), let $X \hookarr X'$ be a $G$-equivariant embedding into a smooth affine $S$-scheme $X'$.  If $W' \subseteq X'$ satisfies the conditions of the theorem with $\psi': \cW' \arr V'$ a good moduli space.  Then $\cW = \cW' \times_{\cX'} \cX = [W/G_f]$ for an open affine $G_f$-invariant open $W \subseteq X$.  We have a commutative cube
$${\def\objectstyle{\scriptstyle}
\def\labelstyle{\scriptstyle}
\xymatrix@=20pt{
				&\cW \ar[rr] \ar[dd] \ar[dl]			&				& \cX \ar[dd] \ar[dl] \\
\cW' \ar[rr] \ar[dd]&						& \cX' \ar[dd]	& \\
				&V \ar[rr] \ar[dl]				& 				& Y \ar[dl] \\
V' \ar[rr]	&							&Y'		&
}}$$
where the vertical arrows are good moduli spaces and the arrows out of the page are closed immersions.  The top square and front square are cartesian.  We claim that the bottom square is also cartesian.  Indeed, there is a commutative diagram
$$\xymatrix{
\cW \ar[r] \ar[d]				& \cX \ar[d] \\
V' \times_{Y'} Y \ar[r] \ar[d]		& Y \ar[d] \\
V' \ar[r]					& Y'
}$$
Since the big square and the bottom square are cartesian, the top square is cartesian.  Therefore, $\cW \arr V' \times_{Y'} Y$ is a good moduli space so by uniqueness, the induced map $V \arr V' \times_{Y'} Y$ is an isomorphism.  Therefore, in the cube, the back square is cartesian and the horizontal arrows are \'etale. \end{proof}


\bibliography{../references}{}
\bibliographystyle{amsalpha}

\end{document}